\newtheorem{definition}{Definition}[section]
\newtheorem{lemma}[definition]{Lemma}
\newtheorem{theorem}[definition]{Theorem}
\newtheorem{restrict}[definition]{Restriction}
\newtheorem{remark}[definition]{Remark}
\numberwithin{equation}{section}
\newcommand{\R}{\mathbb{R}}
\newcommand{\N}{\mathbb{N}}
\newcommand{\D}{\mathbb{D}}
\newcommand{\bb}[1]{\boldsymbol{#1}}
\newcommand{\floor}[1]{\lfloor #1 \rfloor}
\newcommand{\prob}[2]{\mathbb{P}_{#1}\hspace{-0.3mm}\left( #2 \right)}
\newcommand{\esp}[2]{\mathbb{E}_{#1}\hspace{-0.3mm}\left[ #2 \right]}
\newcommand{\var}[2]{\mathbb{V}_{#1}\hspace{-0.3mm}\left( #2 \right)}
\newcommand{\cov}[2]{\text{Cov}\hspace{-0.3mm}\left(#1,#2\right)}
\begin{document}

\begin{frontmatter}

    \title{Maxima of branching random walks \\with piecewise constant variance}%
    \runtitle{Time-inhomogeneous branching random walk}%

    \begin{aug}
        \author{
            \fnms{Fr\'ed\'eric}%
            \snm{Ouimet}%
            \ead[label=e1]{ouimetfr@dms.umontreal.ca}%
            \ead[label=e2,url]{https://sites.google.com/site/fouimet26}%
        }%
        \affiliation{Universit\'e de Montr\'eal}%
        \address{
            F. Ouimet \\
            D\'epartement de Math\'ematiques et de Statistique \\
            Universit\'e de Montr\'eal \\
            2920 chemin de la Tour \\
            Montr\'eal, Qu\'ebec H3T 1J4 \\
            Canada \\
            \printead{e1} \\
            \printead{e2}
        }%
        \runauthor{F. Ouimet}%
    \end{aug}

    \begin{abstract}
        This article extends the results of \cite{MR2968674} on branching random walks (BRWs) with Gaussian increments in time inhomogeneous environments. We treat the case where the variance of the increments changes a finite number of times at different scales in $[0,1]$ under a slight restriction. We find the asymptotics of the maximum up to an $O_{\mathbb{P}}(1)$ error and show how the profile of the variance influences the leading order and the logarithmic correction term. A more general result was independently obtained by \cite{MR3361256} when the law of the increments is not necessarily Gaussian. However, the proof we present here generalizes the approach of \cite{MR2968674} instead of using the spinal decomposition of the BRW. As such, the proof is easier to understand and more robust in the presence of an approximate branching structure.
    \end{abstract}

    \begin{keyword}[class=MSC]
        \kwd[Primary ]{60J80}
        \kwd[; Secondary ]{60G50}
    \end{keyword}

    \begin{keyword}
        \kwd{extreme value theory}
        \kwd{branching random walks}
        \kwd{time inhomogeneous environments}
    \end{keyword}

\end{frontmatter}

\section{Introduction}
    \subsection{The model}\label{sec:model}

    The tree underlying the branching process we are interested in can be described as follows. At time $k = 0$, there exists only one particle $o$, called the \textit{origin}, and we set $\D_0 \circeq \{o\}$. At time $k = 1$, there are $b = 2$ particles and each of them is linked to $o$ by an edge. Denote by $\D_1$ the set of particles at time $1$. At time $k = 2$, there are four particles, two of which are linked to the first particle in $\D_1$ and the other two are linked to the second particle in $\D_1$. The set of particles at time $2$ is denoted by $\D_2$. The tree is defined iteratively in this manner up to time $k = n$, where $\D_k$ denotes the set of all particles at time $k$ and $|\D_k| = 2^k$. Figure \ref{fig:binary.tree} illustrates the tree structure.

    \setlength{\belowcaptionskip}{-6pt}
    \begin{figure}[ht]
        \centering
        \includegraphics[scale=0.8]{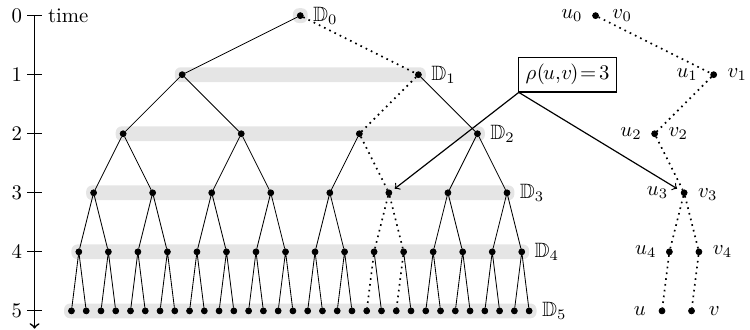}
        \captionsetup{width=0.8\textwidth}
        \caption{The tree structure with a branching factor $b=2$.}
        \label{fig:binary.tree}
    \end{figure}

    For all $v\in \D_n$, we denote by $v_k$ the \textit{ancestor of $v$ at time $k$}, namely the unique particle in $\D_k$ that intersects the shortest path from $o$ to $v$. The \textit{branching time} $\rho(u,v)$ is the latest time at which $u,v\in \D_n$ have the same ancestor. Formally,
    \vspace{-1.5mm}
    \begin{equation*}
        \rho(u,v) \circeq \max\{k\in \{0,1,\ldots,n\} : u_k = v_k\}.
    \end{equation*}

    In the standard branching random walk (BRW) setting, i.i.d.\hspace{-0.3mm} Gaussian random variables $\mathcal{N}(0,\sigma^2)$ are assigned to each branch of the tree structure and the field of interest is $\{S_v\}_{v\in \D_n}$, where $S_v$ is the sum of the Gaussian variables along the shortest path from $o$ to $v$. In the time-inhomogeneous context, the variance of the Gaussian variables depends on time. Fix $M\in \N$ and consider the parameters
    \begin{equation*}
        \begin{aligned}
            \boldsymbol{\sigma} &\circeq (\sigma_1,\sigma_2, \ldots, \sigma_M)\in (0,\infty)^M \ \ \ \ \ \ \ \ \ & \text{(variance parameters)} \\
            \boldsymbol{\lambda} &\circeq (\lambda_1,\lambda_2, \ldots, \lambda_M)\in (0,1]^M & \text{(scale parameters)}
        \end{aligned}
    \end{equation*}
    where $0 \circeq \lambda_0 < \lambda_1 < \ldots < \lambda_M\circeq 1$.
    The parameters $(\boldsymbol{\sigma}, \boldsymbol{\lambda})$ can be encoded simultaneously in the left-continuous step function
    \vspace{-1mm}
    \begin{equation*}
        \sigma(s) \circeq \sigma_1 \boldsymbol{1}_{\{0\}}(s) + \sum_{i=1}^M \sigma_i \boldsymbol{1}_{(\lambda_{i-1},\lambda_i]}(s), \quad s\in [0,1].
    \end{equation*}
    The following definition and the results of this paper are easily extended to BRWs with other \textit{branching factors} $b\in \N$.

    \begin{definition}\label{def:IBRW}
        The $(\bb{\sigma},\bb{\lambda})$-BRW of length $n$ is a collection of positively correlated random walks $\{\{S_v(t)\}_{t=0}^n\}_{v\in \D_n}$ defined by
        \begin{equation}\label{eq:IBRW}
            S_v(t) \circeq \sum_{i=1}^M \sum_{k = \floor{\lambda_{i-1} n} + 1}^{\floor{\lambda_i n} \wedge t} \hspace{-4mm}\sigma_i \hspace{0.5mm}Z_{v_k}, \ \ \ t\in \{0,1,\ldots,n\}, \ v\in \D_n,
        \end{equation}
        where $\{Z_{v_k}\}_{k\in \{1,\ldots,n\};v\in \D_n}$ are i.i.d.\hspace{-0.3mm} $\mathcal{N}(0,1)$ random variables and $b = 2$.
    \end{definition}

    By convention, summations are zero when there are no indices. To avoid trivial corrections in the proofs, always assume, without loss of generality, that $t_i \circeq \lambda_i n\in \N_0$ for all $i\in \{0,1,\ldots,M\}$. Hence, the floor functions can be dropped in \eqref{eq:IBRW}. For simplicity, we set $S_v \circeq S_v(n)$.

    \subsection{Main result}\label{sec:main.result}

    First, we introduce some notations.
    For any positive measurable function $f:[0,1]\to\R$, define the integral operators
    \vspace{-1mm}
    \begin{equation*}
        \mathcal{J}_f(s) \circeq \int_0^s f(r) dr \quad \text{and} \quad \mathcal{J}_f(s_1,s_2) \circeq \int_{s_1}^{s_2} f(r) dr.\vspace{-1mm}
    \end{equation*}
    The first order of the maximum for the $(\bb{\sigma},\bb{\lambda})$-BRW is merely the solution to an optimization problem involving the {\it concave hull} of $\mathcal{J}_{\sigma^2}(\cdot)$, which we denote by $\hat{\mathcal{J}}_{\sigma^2}$.
    We refer the reader to \cite{Ouimet2014master} for a detailed heuristic and a rigorous proof, and to \cite{MR3541850} for the same results in the context of the scale-inhomogeneous Gaussian free field.
    By definition, the graph of $\hat{\mathcal{J}}_{\sigma^2}$ is an increasing and concave polygonal line, see Figure \ref{fig:concave} below for some examples.

    \vspace{4mm}
    \setlength{\belowcaptionskip}{0pt}
    \begin{figure}[ht]
        \includegraphics[scale=0.51]{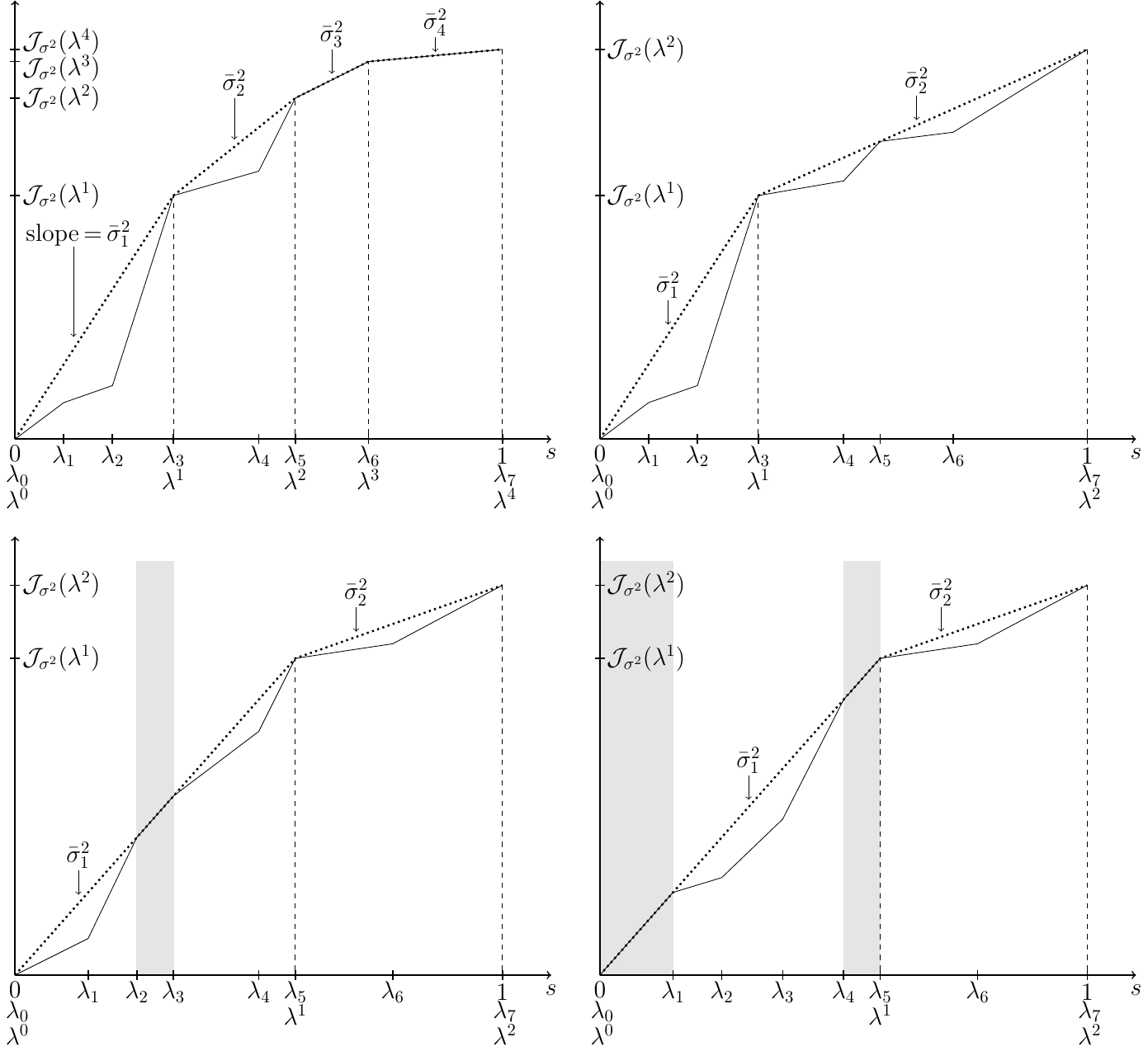}
        \captionsetup{width=0.85\textwidth}
        \caption{Examples of $\mathcal{J}_{\sigma^2}$ (closed lines) and $\hat{\mathcal{J}}_{\sigma^2}$ (dotted lines).}
        \label{fig:concave}
    \end{figure}

    It is easy to see that there exists a unique non-increasing left-continuous step function $s \mapsto \bar{\sigma}(s)$ such that
    \vspace{-1mm}
    \begin{equation*}
        \hat{\mathcal{J}}_{\sigma^2}(s) = \mathcal{J}_{\bar{\sigma}^2}(s) = \int_0^s \bar{\sigma}^2(r) dr \ \ \text{ for all $s\in (0,1]$.}\vspace{-1mm}
    \end{equation*}
    The scales in $[0,1]$ where $\bar{\sigma}$ jumps are denoted by
    \begin{equation}\label{eqn: lambda jumps}
        0 \circeq \lambda^0 < \lambda^1 < \ldots < \lambda^m \circeq 1,\vspace{-1mm}
    \end{equation}
    where $m \leq M$. As we will see in Theorem \ref{thm:IBRW.order2.restricted}, the \textit{effective scale parameters} $\lambda^j$ and the \textit{effective variance parameters} $\bar{\sigma}(\lambda^j)$ are the only parameters needed to fully determine the first and second order of the maximum for inhomogeneous branching random walks.

    To be consistent with previous notations, we set $\bar{\sigma}_j \circeq \bar{\sigma}(\lambda^j)$ and $t^j \circeq \lambda^j n$. We write $\nabla_{\hspace{-0.5mm}j}$ for the difference operator with respect to the index $j$. When the index variable is obvious, we omit the subscript. For example, $\nabla t^j = t^j - t^{j-1}$.

    To simplify the presentation of the proof of the main theorem, we impose a restriction on the variance parameters.
    \begin{restrict}\label{eq:IBRW.restriction}
        If $\mathcal{J}_{\sigma^2}$ and $\mathcal{J}_{\bar{\sigma}^2}$ coincide on a subinterval of $[\lambda^{j-1},\lambda^j]$ for some $j$, then they must coincide everywhere on $[\lambda^{j-1},\lambda^j]$.
    \end{restrict}

    \begin{remark}\label{rem:IBRW.restriction}
        Note that $\mathcal{J}_{\sigma^2}$ and $\mathcal{J}_{\bar{\sigma}^2}$ can still coincide at isolated points in $(\lambda^{j-1},\lambda^j)$ when they do not coincide everywhere in that interval.
        The union of all the scales $\lambda^j$ and all the isolated points where $\mathcal{J}_{\sigma^2}$ and $\mathcal{J}_{\bar{\sigma}^2}$ coincide form a subset of the scale parameters, say $\{\lambda_{i_d}\}_{0 \leq d \leq p}$, where $m \leq p \leq M$.
    \end{remark}
    \hspace{-5.5mm}
    For example, in Figure \ref{fig:concave}, the two models at the top satisfy Restriction \ref{eq:IBRW.restriction}, but the two models at the bottom do not. For the top models, the sets of scales described in Remark \ref{rem:IBRW.restriction} are respectively $\{\lambda_0,\lambda_3,\lambda_5,\lambda_6,\lambda_7\}$ and $\{\lambda_0,\lambda_3,\lambda_5,\lambda_7\}$.

    The main result of this paper is the derivation of the second order of the maximum (up to an $O_{\mathbb{P}}(1)$ error) for the $(\bb{\sigma},\bb{\lambda})$-BRW of Definition \ref{def:IBRW}, under Restriction \ref{eq:IBRW.restriction}. This was an open problem in \cite{MR2968674}.

    \begin{theorem}\label{thm:IBRW.order2.restricted}
        Let $\{S_v\}_{v\in \D_n}$ be as in Definition \ref{def:IBRW}, under Restriction \ref{eq:IBRW.restriction}.
        Let $g \circeq \sqrt{2 \log 2}$. For all $\varepsilon > 0$, there exists $K_{\varepsilon} > 0$ such that for all $n\in \N$,
        \begin{equation*}
            \mathbb{P}\Bigg(\Bigg| \max_{v\in \D_n} S_v - \sum_{j=1}^m \left[g \bar{\sigma}_j \nabla t^j - \frac{(1 + 2\cdot \delta_j) \bar{\sigma}_j}{2 g} \log(\nabla t^j)\right]\Bigg| \geq K_{\varepsilon}\Bigg) < \varepsilon,
        \end{equation*}
        where $\delta_j \circeq 1$ when $\mathcal{J}_{\sigma^2}$ and $\mathcal{J}_{\bar{\sigma}^2}$ coincide on $[\lambda^{j-1},\lambda^j]$, and $\delta_j \circeq 0$ otherwise.
    \end{theorem}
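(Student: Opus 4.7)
The strategy is to partition $[0,n]$ into the $m$ effective blocks $[t^{j-1},t^j]$ and prove that, along the ancestral line of the eventual maximizer, the increment on each block is
\begin{equation*}
    m_j \circeq g\bar{\sigma}_j \nabla t^j - \frac{(1+2\delta_j)\bar{\sigma}_j}{2g}\log(\nabla t^j) + O_{\PP}(1);
\end{equation*}
telescoping then delivers the theorem. Restriction \ref{eq:IBRW.restriction} forces a clean dichotomy on each block: either (i) $\mathcal{J}_{\sigma^2}=\hat{\mathcal{J}}_{\sigma^2}$ throughout, so the restricted BRW is time-homogeneous with variance $\bar{\sigma}_j^2$ and $\delta_j=1$, or (ii) $\mathcal{J}_{\sigma^2}<\hat{\mathcal{J}}_{\sigma^2}$ on the open subinterval away from isolated points (Remark \ref{rem:IBRW.restriction}), and $\delta_j=0$. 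These two regimes produce the two different log corrections via distinct second-moment computations, but share a common upper bound.

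\textbf{Upper bound.} Let $\gamma:[0,n]\to \R$ be the piecewise-linear interpolant through the cumulative targets $(t^j,\sum_{j'\leq j}m_{j'})_{j=0}^m$. I would first show, by a union bound $\sum_{t=0}^n 2^t \PP(S_v(t)>\gamma(t)+K_\varepsilon)$, that with probability at least $1-\varepsilon/2$ no $v\in \D_n$ ever crosses $\gamma+K_\varepsilon$. Finiteness of the sum follows from standard Gaussian tail estimates: on each block $[t^{j-1},t^j]$ the log correction in $m_j$ is tuned so that the per-step first moment stays bounded, while the concavity of $\hat{\mathcal{J}}_{\sigma^2}$ at the corner times produces a strictly positive gap between $\gamma$ and the natural first-order trajectory on either side, yielding exponential decay away from the corners. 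A sharper first-moment bound at $t=n$ then gives $\max_v S_v\leq \sum_{j=1}^m m_j + K_\varepsilon$.

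\textbf{Lower bound.} Blockwise, I would apply a truncated second-moment (Paley--Zygmund) method to the count $N_j$ of particles $v\in \D_{t^j}$ with $S_v(t^j)-S_v(t^{j-1})\in[m_j,m_j+1]$ whose trajectory stays in an appropriate tube. In the homogeneous case ($\delta_j=1$), this is the classical BRW second-moment computation on $\nabla t^j$ steps with the linear ballot barrier, producing the $3/(2g)$ correction. In the strictly concave case ($\delta_j=0$), the strict inequality $\mathcal{J}_{\sigma^2}<\hat{\mathcal{J}}_{\sigma^2}$ on $(\lambda^{j-1},\lambda^j)$ means the partial-sum variance is pointwise below that of a constant-variance walk with variance $\bar{\sigma}_j^2$; this renders the ballot constraint automatic, eliminates the $1/\sqrt{\nabla t^j}$ entropy factor, and reduces the log correction to $1/(2g)$. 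For the second moment, splitting over the branching time of pairs $u,v\in \D_{t^j}$ inside $(t^{j-1},t^j)$ produces a covariance excess $n\cdot(\hat{\mathcal{J}}_{\sigma^2}-\mathcal{J}_{\sigma^2})$ that is strictly positive by Restriction \ref{eq:IBRW.restriction}, suppressing those pairs. Concatenating across blocks via the branching property at the scales $t^j$ (each block applied to the subtree below the previous block's near-maximizer) accumulates the $O_{\PP}(1)$ errors over only finitely many ($m$) blocks.

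\textbf{Main obstacle.} The technical heart is the strictly concave case: one must simultaneously (i) pick a tube narrow enough for the first moment to sharply yield the $1/(2g)$ correction yet wide enough for the restricted event to have $\Theta(1)$ probability, and (ii) show that pairs branching strictly inside the block contribute negligibly to the second moment. Both rely on the quantitative gap $\hat{\mathcal{J}}_{\sigma^2}-\mathcal{J}_{\sigma^2}>0$ on $(\lambda^{j-1},\lambda^j)$, which Restriction \ref{eq:IBRW.restriction} promotes to ``strict off a finite set''; without it, one would be forced to invoke the spinal decomposition of \cite{MR3361256} rather than this elementary block-by-block argument.
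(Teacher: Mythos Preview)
Your upper bound has a genuine gap on the homogeneous blocks ($\delta_j=1$). The claim that with probability at least $1-\varepsilon/2$ no $v\in\D_n$ ever crosses $\gamma+K_\varepsilon$ is in fact false there: on such a block the ancestral line of the maximizer behaves like a $\bar\sigma_j$-Brownian bridge around the linear interpolant, so at the midpoint it deviates by order $\sqrt{\nabla t^j}$, not $O(1)$. Concretely, at time $s$ into the block one has $2^s\,\PP(S_v(s)>\gamma(s)+K)\asymp (\nabla t^j)^{3s/(2\nabla t^j)}/\sqrt{s}$, which at $s=\nabla t^j/2$ is of order $(\nabla t^j)^{1/4}$; the union bound diverges and, more to the point, the event it is meant to control genuinely occurs with probability tending to one. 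The paper's remedy is to replace the linear barrier by $M_{n,x}^\star=M_n^\star+b_n+x$, where $b_n$ is a logarithmic bulge of height $\tfrac{5}{2}\tfrac{\bar\sigma_j}{g}\log\big((k-t^{j-1})\wedge(t^j-k)\big)$ on each homogeneous block, to decompose over the \emph{first} crossing time $k\in\mathcal{T}_m$, and to control each term after conditioning on $(S_v(t^j))_{j<l}$ via a discrete Brownian-bridge ballot estimate (Lemma~\ref{lem:bramson.pont.brownien}). This barrier-plus-bridge machinery is the technical heart of the upper bound and is entirely absent from your sketch.

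Your lower bound is closer in spirit to the paper's, but Paley--Zygmund applied blockwise and concatenated over the $m$ blocks only yields $\PP\big(\max_v S_v\geq M_n^\star(n)-C\big)\geq c$ for some fixed $c>0$, not $\geq 1-\varepsilon$. To upgrade to the tightness statement you need an additional ingredient. The paper imports the general tightness of $\max_v S_v-\text{Med}(\max_v S_v)$ for Gaussian BRW from \cite{MR3012090} and then uses the positive-probability bound to pin the median within $O(1)$ of $M_n^\star(n)$; without this step (or an equivalent bootstrap), your lower bound remains incomplete.
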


    This theorem was proved in \cite{MR2968674} for the case $M\hspace{-0.5mm}=\hspace{-0.5mm}2$ and $\lambda_1\hspace{-0.5mm}=\hspace{-0.5mm}1/2$. Note that Restriction \ref{eq:IBRW.restriction} is always satisfied when $M\hspace{-0.5mm}=\hspace{-0.5mm}2$.

    \subsection{Related works}\label{sec:related.works}

    The first order of the maximum (without restriction),
    \begin{equation*}
        \lim_{n\rightarrow \infty} \mathbb{P}\Bigg(\Big|\max_{v\in \D_n} S_v - \sum_{j=1}^m g \bar{\sigma}_j \nabla t^j\Big| > \varepsilon n\Bigg) = 0, \quad\forall \varepsilon > 0,
    \end{equation*}
    was proved in Section 2 of \cite{Ouimet2014master} for the $(\boldsymbol{\sigma},\boldsymbol{\lambda})$-BRW and in \cite{MR3541850} for the analogous model of scale-inhomogeneous Gaussian free field (GFF). The proofs rely on an analysis of so-called ``optimal paths'' showing where the maximal particle must be at all times with high probability. These paths were found by a first moment heuristic and the resolution of a related optimisation problem (using the Karush-Kuhn-Tucker theorem).

    The more involved question of finding the second order of the maximum was first solved by \cite{MR2968674} \hspace{0.3mm}for the case $M\hspace{-0.5mm}=\hspace{-0.5mm}2$ and $\lambda_1\hspace{-0.5mm}=\hspace{-0.5mm}1/2$, and later by \cite{MR3361256}, when the law of the increments changes a finite number of times but is not necessarily Gaussian. In his proof, Mallein develops a time-inhomogeneous version of the spinal decomposition for the BRW. The argument presented in this paper was first developed, without the knowledge of Mallein's results, in Section 2.4 of \cite{Ouimet2014master} and instead generalizes the approach of \cite{MR2968674}. The proof rely on the control of the increments of high points at every \textit{effective scale} $\lambda^j$.

    One shortfall of the spinal decomposition is that it completely relies on the presence of an \textit{exact} branching structure. Specifically, a crucial step in \cite{MR3361256} is the proof of a time-inhomogeneous version of the classical many-to-one lemma, which is a direct consequence of his comparison between the size-biased law of the BRW (the usual change of measure) and a certain projection of a law on the set of planar rooted marked trees with spine.

    In contrast, our method can be adapted to a number of cases where the branching structure is only \textit{approximate}.
    For instance, although no explicit proof is written down, it can be applied to prove the second order of the maximum for the scale-inhomogeneous GFF of \cite{MR3541850}.
    The model differs from the time-inhomogeneous BRW in two ways :
    \begin{enumerate}
        \item The branching structure is {\it approximate} in the sense that increments
            of the field that are below the {\it branching scale} are not perfectly correlated
            and they decorrelate smoothly near the branching scale.
        \item At a given scale, the covariance of the increments of the field decays near
            the boundary of the domain. In the context of BRWs, this means that at a given time,
            the law of each point process would depend on the position of the associated ancestors
            in the tree.
    \end{enumerate}
    The recent developments in the study of
    \begin{itemize}
        \item[$\bullet$] cover times (see e.g.\hspace{-0.3mm} \cite{MR3263552,arXiv:1709.08151,MR3129800,MR3602852,MR3126579,MR1998762,MR2123929,MR2206347,MR2946152,MR3178464,MR2912708,MR2921974});
        \item[$\bullet$] the extremes of the randomized Riemann zeta function on the critical line (see e.g.\hspace{-0.3mm} \cite{MR3619786,arXiv:1807.04860,arXiv:1706.08462,arXiv:1304.0677,doi:10.1214/18-ECP154,arXiv:1609.00027});
        \item[$\bullet$] the maxima of the Riemann zeta function on random intervals of the critical line (see e.g.\hspace{-0.3mm} \cite{arXiv:1612.08575,doi:10.1007/s00440-017-0812-y});
        \item[$\bullet$] the maxima of the characteristic polynomials of random unitary matrices (see e.g.\hspace{-0.3mm} \cite{MR3594368,arXiv:1607.00243,doi:10.1093/imrn/rnx033});
        \item[$\bullet$] etc.
    \end{itemize}
    show that approximate branching structures are present in a huge variety of models. Hence, the approach of this paper might become relevant in applications beyond the study of ``pure'' BRW.

    For other recent and relevant results on branching processes in time-inhomogeneous environments, the reader is referred to \cite{MR3164771,MR3351476,MR2070334,MR2070335,arXiv:1802.03034,MR2981635,MR3531703,MR3373310,arXiv:1507.08835,MR3731796}.

\section{Proof of the main result}

    \subsection{Preliminaries}

    For all $v\in \D_n$ and $k,l\in \{1,\ldots,M\}$, we can compute from Definition \ref{def:IBRW} :
    \vspace{-1.8mm}
    \begin{equation}\label{eq:IBRW.variance.increments}
        \var{}{S_v(t_l) - S_v(t_{k-1})} = \sum_{i=k}^l \sigma_i^2 \nabla t_i = \mathcal{J}_{\sigma^2}(\lambda_{k-1},\lambda_l)\, n.\vspace{-1.8mm}
    \end{equation}
    The variance of the increments in \eqref{eq:IBRW.variance.increments} will be used repeatedly during the proofs in conjunction with the following lemma.

    \begin{lemma}[Gaussian estimates, see e.g.\hspace{-0.3mm} \cite{MR2319516}]\label{lem:gaussian.estimates}
        Suppose that $Z\sim \mathcal{N}(0,\sigma^2)$ where $\sigma > 0$, then for all $z > 0$,
        \begin{equation*}
            \left(1 - \frac{\sigma^2}{z^2}\right) \frac{\sigma}{\sqrt{2\pi} z} e^{-\frac{z^2}{2 \sigma^2}} \leq \prob{}{Z \geq z} \leq \frac{\sigma}{\sqrt{2\pi} z} e^{-\frac{z^2}{2 \sigma^2}}.
        \end{equation*}
    \end{lemma}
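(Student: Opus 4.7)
The plan is to reduce to the standard normal case and then establish the two bounds by exhibiting explicit antiderivatives of the form $h(x)\,e^{-x^2/2}$ with $h$ a suitable rational function. Writing $Z = \sigma Y$ with $Y \sim \mathcal{N}(0,1)$ and setting $u \circeq z/\sigma > 0$, the claimed double inequality is equivalent to the classical Mills ratio bounds
\begin{equation*}
    \left(1 - \frac{1}{u^2}\right) \frac{1}{\sqrt{2\pi}\, u}\, e^{-u^2/2} \;\leq\; \mathbb{P}(Y \geq u) \;\leq\; \frac{1}{\sqrt{2\pi}\, u}\, e^{-u^2/2},
\end{equation*}
so it suffices to prove these and then reinstate the factors of $\sigma$.

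For the upper bound, the idea is to use $x/u \geq 1$ on the range of integration, which turns the Gaussian density into an exact derivative:
\begin{equation*}
    \mathbb{P}(Y \geq u) = \frac{1}{\sqrt{2\pi}} \int_u^\infty e^{-x^2/2}\, dx \;\leq\; \frac{1}{\sqrt{2\pi}\, u} \int_u^\infty x\, e^{-x^2/2}\, dx \;=\; \frac{1}{\sqrt{2\pi}\, u}\, e^{-u^2/2},
\end{equation*}
since $x\, e^{-x^2/2} = -\tfrac{d}{dx} e^{-x^2/2}$. This is a one-line computation.

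For the lower bound, the plan is to look for $h$ such that $\bigl(h(x)\, e^{-x^2/2}\bigr)' = \bigl(h'(x) - x\, h(x)\bigr) e^{-x^2/2}$ approximates $e^{-x^2/2}$ from below. Taking $h(x) \circeq -\bigl(\tfrac{1}{x} - \tfrac{1}{x^3}\bigr)$, a direct computation yields
\begin{equation*}
    \frac{d}{dx}\left[-\left(\frac{1}{x} - \frac{1}{x^3}\right) e^{-x^2/2}\right] \;=\; \left(1 - \frac{3}{x^4}\right) e^{-x^2/2} \;\leq\; e^{-x^2/2}.
\end{equation*}
Integrating from $u$ to $\infty$ and using that $h(x)\, e^{-x^2/2} \to 0$ as $x \to \infty$ gives
\begin{equation*}
    \left(\frac{1}{u} - \frac{1}{u^3}\right) e^{-u^2/2} \;\leq\; \int_u^\infty e^{-x^2/2}\, dx,
\end{equation*}
which, after dividing by $\sqrt{2\pi}$, is precisely the lower Mills ratio bound.

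There is essentially no serious obstacle here: the only mildly non-obvious step is guessing the correct rational $h$ for the lower bound, and even that is forced by requiring the polynomial correction $h'(x) - x h(x)$ to equal $1$ plus an $O(1/x^4)$ remainder. After rescaling $u \mapsto z/\sigma$ and multiplying through by $\sigma$ where needed, both inequalities transfer verbatim to the stated form in terms of $z$ and $\sigma$.
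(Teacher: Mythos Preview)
Your proof is correct and follows the standard approach to the Mills ratio bounds. Note that the paper does not actually prove this lemma: it is stated with a reference (``see e.g.\ \cite{MR2319516}'') and used as a black box, so there is no paper proof to compare against. Your argument is the canonical one and would serve perfectly well as a self-contained justification.
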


    The particle achieving the maximum of the BRW at time $n$ act like a Brownian bridge around the maximum level on all the intervals $[t^{j-1},t^j]$ where $\mathcal{J}_{\sigma^2}(\cdot / n)$ and $\mathcal{J}_{\bar{\sigma}^2}(\cdot / n)$ coincide. The extra log terms in Theorem \ref{thm:IBRW.order2.restricted} (when $\delta_j = 1$) compensate for the ``cost'' of the Brownian bridge to stay below a certain logarithmic barrier.
    The sets $\mathcal{A}_l$ below identify the indices $j$ of these intervals up to scale $\lambda^l$. The sets $\mathcal{T}_l$ consist of the \textit{effective times} $t^j, ~1 \leq j \leq l$, and the integer times in $[t^{j-1},t^j], ~j\in \mathcal{A}_l$, where a Brownian bridge estimate will be needed. More precisely, for all $l\in \{1,\ldots,m\}$,
    \begin{align*}
        \mathcal{A}_l
        &\circeq \{j\in \{1,\ldots,l\} : \delta_j = 1\} \\
        &= \left\{j\in \{1,\ldots,l\} : \hspace{-2mm}
            \begin{array}{l}
                \left.\mathcal{J}_{\sigma^2}\right|_{[\lambda^{j-1},\lambda^j]} \equiv \left.\mathcal{J}_{\bar{\sigma}^2}\right|_{[\lambda^{j-1},\lambda^j]}
            \end{array}
            \hspace{-1.5mm}\right\}, \\
        \mathcal{T}_l
        &\circeq \{t^1,t^2,\ldots,t^l\} \cup \mathsmaller{\bigcup}_{j\in \mathcal{A}_l} \{t^{j-1},t^{j-1} + 1,\ldots,t^j\}.
    \end{align*}

    \hspace{-4.2mm}Let \hspace{-0.5mm}$\vartheta_k \hspace{-0.7mm}\in \hspace{-0.7mm}\{1,\ldots,m\}$ be the index such that $t^{\vartheta_k - 1} \hspace{-1.2mm}< \hspace{-0.3mm}k \hspace{-0.4mm}\leq \hspace{-0.4mm}t^{\vartheta_k}$\hspace{-0.5mm}. For all $k \hspace{-0.7mm}\in \hspace{-0.7mm}\{0,\ldots,n\}$, the \textbf{concave hull} of the \textit{optimal path for the maximum} is
    \vspace{-0.5mm}
    \begin{equation}\label{eq:IBRW.order2.optimal.path}
        M_n^{\star}(k) \circeq \sum_{j=1}^{\vartheta_k} \frac{(k \wedge t^j - t^{j-1})}{\nabla t^j} \left[g \bar{\sigma}_j \nabla t^j - \frac{(1 + 2\cdot \delta_j) \bar{\sigma}_j}{2 g}\log(\nabla t^j)\right]
    \end{equation}
    where $g \circeq \sqrt{2 \log 2}$, as in Theorem \ref{thm:IBRW.order2.restricted}.
    We refer the reader to \cite{Ouimet2014master} or \cite{MR3541850} for a first moment heuristic. Note that $M_n^{\star}$ and the optimal path coincide on $\mathcal{T}_m$, see Figure \ref{fig:frontier.order2} for an example of $M_n^{\star}$ under Restriction \ref{eq:IBRW.restriction}.
    
    \vspace{3mm}
    \begin{figure}[ht]
        \includegraphics[scale=0.63]{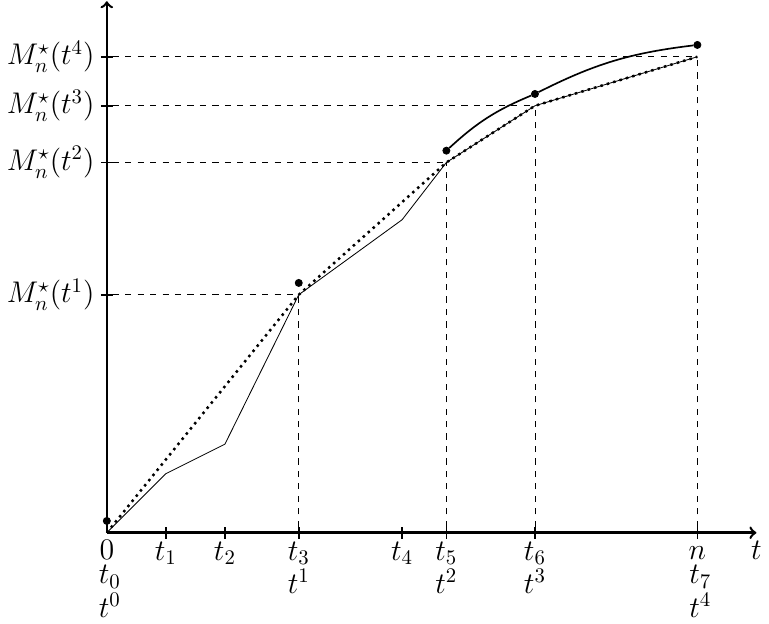}
        \captionsetup{width=0.85\textwidth}
        \caption{Example of the path $M_{n,x}^{\star}$ on the set $\mathcal{T}_m$ (in bold), the optimal path (thin line) and its concave hull $M_n^{\star}$ (dotted line).}
        \label{fig:frontier.order2}
    \end{figure}

    For all $k\in \mathcal{T}_m$, define the logarithmic barrier as
    \begin{equation}\label{eq:order2.barrier}
        b_n(k) \circeq \hspace{-0.3mm}
        \left\{\hspace{-1mm}
        \begin{array}{ll}
            \vspace{1mm}0 &\mbox{if } k\in \{t^0,t^1,\ldots,t^m\} \\
            \vspace{1mm}\frac{5}{2} \frac{\bar{\sigma}_{\vartheta_k}}{g} \log(k - t^{\vartheta_k - 1}) \hspace{-1mm}&\mbox{if } \vartheta_k\in \mathcal{A}_m, ~t^{\vartheta_k - 1} < k \leq \frac{t^{\vartheta_k - 1} + t^{\vartheta_k}}{2} \\
            \frac{5}{2} \frac{\bar{\sigma}_{\vartheta_k}}{g} \log(t^{\vartheta_k} - k) \hspace{-1mm}&\mbox{if } \vartheta_k\in \mathcal{A}_m, ~\frac{t^{\vartheta_k - 1} + t^{\vartheta_k}}{2} < k < t^{\vartheta_k}.
        \end{array}
        \right.
    \end{equation}
    For all $x > 0$, denote
    \begin{equation*}
        b_{n,x}(k) \circeq b_n(k) + x \qquad\text{and}\qquad
        M_{n,x}^{\star}(k) \circeq M_n^{\star}(k) + b_{n,x}(k).
    \end{equation*}
    Let us now define precisely what is meant by a Brownian bridge.
    \begin{definition}[Discrete Brownian bridge]\label{def:brownian.bridge}
        Let \hspace{0.3mm}$0 \leq \lambda < \lambda' \leq 1$ be such that $\lambda n,\lambda' n\in \N_0$ and $\sigma > 0$. The discrete $\sigma$-Brownian bridge on the interval $[\lambda n,\lambda' n]$ is a centered Gaussian vector $(B_k)_{k=\lambda n}^{\lambda' n}$ such that
        \begin{enumerate}[\ \ \ (a)]
            \item $B_{\lambda n} = B_{\lambda' n} = 0$,
            \item $\cov{B_k}{B_{k'}} = \frac{(k \wedge k' - \lambda n) (\lambda'n - k \vee k')}{(\lambda' - \lambda)n} \sigma^2, \ \ \, k,k' \hspace{-0.8mm}\in \hspace{-0.3mm}\{\lambda n,\lambda n + 1,\ldots,\lambda' n\}$. \vspace{3mm}
        \end{enumerate}
    \end{definition}

    Here are relevant examples of discrete Brownian bridges constructed from a discrete random walk.
    \begin{lemma}\label{lem:tech.lemma.2}
        Let $v\in \D_n$ and $j\in \mathcal{A}_m$. Then, the centered Gaussian vector
        \begin{equation}\label{eq:lem:tech.lemma.2.def.brownian.bridge}
            B_{v,i}^j \circeq S_v(i) - S_v(t^{j-1}) - \frac{i - t^{j-1}}{\nabla t^j} \nabla S_v(t^j), ~~~ t^{j-1} \leq i \leq t^j,
        \end{equation}
        is independent of $\{S_v(i')\}_{i'\not\in (t^{j-1},t^j)}$ \hspace{-0.3mm}and \hspace{-0.1mm}defines \hspace{-0.1mm}a \hspace{-0.1mm}discrete $\bar{\sigma}_j$-Brownian bridge under Definition \ref{def:brownian.bridge}.
        Similarly, when $l\in \mathcal{A}_m$ and $t^{l-1} \hspace{-0.5mm} < k \leq t^l$, the centered Gaussian vector
        \begin{equation}\label{eq:lem:tech.lemma.2.def.brownian.bridge.2}
            B_{v,i} \circeq S_v(i) - S_v(t^{l-1}) - \frac{i - t^{l-1}}{k - t^{l-1}} (S_v(k) - S_v(t^{l-1})), \ \ t^{l-1} \leq i \leq k,
        \end{equation}
        is independent of $\{S_v(i')\}_{i'\not\in (t^{l-1},k)}$ \hspace{-0.3mm}and defines a discrete $\bar{\sigma}_l$-Brownian bridge.
    \end{lemma}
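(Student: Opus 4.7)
My plan is to reduce Lemma~\ref{lem:tech.lemma.2} to the classical observation that a random walk with i.i.d.\ centered Gaussian increments, minus its linear interpolation between two endpoints, is a Brownian bridge independent of the boundary values. The preliminary observation that unlocks the reduction is that Restriction~\ref{eq:IBRW.restriction} together with $j \in \mathcal{A}_m$ forces $\sigma \equiv \bar{\sigma}_j$ on $(\lambda^{j-1}, \lambda^j]$: by definition of $\mathcal{A}_m$ the integrals $\mathcal{J}_{\sigma^2}$ and $\mathcal{J}_{\bar{\sigma}^2}$ agree on that interval, and $\mathcal{J}_{\bar{\sigma}^2}$ is linear there with slope $\bar{\sigma}_j^2$; differentiating yields $\sigma^2 = \bar{\sigma}_j^2$ almost everywhere, and because $\sigma$ is a step function with jumps at the $\lambda_i$'s, this equality holds on every constant piece inside $(\lambda^{j-1}, \lambda^j]$. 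Consequently, in Definition~\ref{def:IBRW} the increments $S_v(k) - S_v(k-1) = \bar{\sigma}_j Z_{v_k}$ for $k \in \{t^{j-1}+1, \ldots, t^j\}$ are i.i.d.\ $\mathcal{N}(0, \bar{\sigma}_j^2)$ and independent of $\{Z_{v_{k'}}\}_{k' \notin (t^{j-1}, t^j]}$.

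Given this reduction, verifying the Brownian bridge properties of \eqref{eq:lem:tech.lemma.2.def.brownian.bridge} becomes a direct computation. The random variable $B_{v,i}^j$ is a linear combination of the $Z_{v_k}$, hence centered Gaussian, and the boundary conditions $B_{v, t^{j-1}}^j = B_{v, t^j}^j = 0$ follow by inspection. For the covariance, I would expand bilinearly for $t^{j-1} \leq i \leq i' \leq t^j$ using the identities $\cov{S_v(i) - S_v(t^{j-1})}{\nabla S_v(t^j)} = (i - t^{j-1})\,\bar{\sigma}_j^2$ and $\var{}{\nabla S_v(t^j)} = \nabla t^j\, \bar{\sigma}_j^2$; after the arithmetic this collapses to
\begin{equation*}
\cov{B_{v,i}^j}{B_{v,i'}^j} = \frac{(i - t^{j-1})(t^j - i')}{\nabla t^j}\,\bar{\sigma}_j^2,
\end{equation*}
matching Definition~\ref{def:brownian.bridge} with $(\lambda, \lambda', \sigma) = (\lambda^{j-1}, \lambda^j, \bar{\sigma}_j)$.

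For the independence claim, I would observe that $B_{v,i}^j$ is a measurable function of $\{Z_{v_k}\}_{k \in (t^{j-1}, t^j]}$ only, so it is automatically independent of $S_v(i')$ for $i' \leq t^{j-1}$ and of $S_v(i') - S_v(t^j)$ for $i' \geq t^j$. The only remaining source of dependence with $\{S_v(i')\}_{i' \notin (t^{j-1}, t^j)}$ comes through $\nabla S_v(t^j)$, and the same covariance identities give $\cov{B_{v,i}^j}{\nabla S_v(t^j)} = (i - t^{j-1})\bar{\sigma}_j^2 - \frac{i - t^{j-1}}{\nabla t^j}\cdot \nabla t^j\, \bar{\sigma}_j^2 = 0$; joint Gaussianity upgrades this orthogonality to full independence. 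The proof of \eqref{eq:lem:tech.lemma.2.def.brownian.bridge.2} is word-for-word the same after replacing $(j, t^j)$ by $(l, k)$, since $l \in \mathcal{A}_m$ still guarantees i.i.d.\ $\mathcal{N}(0, \bar{\sigma}_l^2)$ increments on $(t^{l-1}, k] \subset (t^{l-1}, t^l]$. There is no real technical obstacle here; the only subtle point is extracting the i.i.d.\ structure from Restriction~\ref{eq:IBRW.restriction}, after which everything follows from the classical Brownian bridge decomposition.
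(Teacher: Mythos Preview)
Your proof is correct and follows essentially the same route as the paper: both extract $\sigma\equiv\bar\sigma_j$ on $(\lambda^{j-1},\lambda^j]$ from the definition of $\mathcal{A}_m$, then verify conditions (a) and (b) of Definition~\ref{def:brownian.bridge} and establish independence via covariance computations and joint Gaussianity. The only cosmetic difference is that the paper computes $\text{Cov}(B_{v,i}^j,S_v(i'))$ directly for $i'\notin(t^{j-1},t^j)$, whereas you decompose $S_v(i')$ into increment blocks and isolate $\nabla S_v(t^j)$ as the only potentially correlated piece; both arrive at the same vanishing covariance. One minor remark: invoking Restriction~\ref{eq:IBRW.restriction} is unnecessary, since $j\in\mathcal{A}_m$ already means $\delta_j=1$, i.e.\ $\mathcal{J}_{\sigma^2}\equiv\mathcal{J}_{\bar\sigma^2}$ on $[\lambda^{j-1},\lambda^j]$ by definition.
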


    \begin{proof}
        We only prove \eqref{eq:lem:tech.lemma.2.def.brownian.bridge} since the proof of \eqref{eq:lem:tech.lemma.2.def.brownian.bridge.2} is totally analogous.
        Assume $j\in \mathcal{A}_m$, meaning that $\sigma(s) = \bar{\sigma}_j$ for all $s\in (\lambda^{j-1},\lambda^j]$.
        Then, for all $i'\in \{0,1,\ldots,t^{j-1}\} \cup \{t^j,t^j+1,\ldots,n\}$, $\text{Cov}\big(B_{v,i}^j,S_v(i')\big)$ is equal to
        \vspace{-1mm}
        \begin{align*}
            &\var{}{S_v(i \wedge i')} - \var{}{S_v(t^{j-1} \wedge i')} - \frac{i - t^{j-1}}{\nabla t^j} \nabla_{\hspace{-0.5mm}j} \hspace{0.3mm}\var{}{S_v(t^j \wedge i')} \notag \\
            &\stackrel{\phantom{\eqref{eq:IBRW.variance.increments}}}{=} \left\{\hspace{-1mm}
            \begin{array}{ll}
                \vspace{0.5mm}\var{}{S_v(i)} - \var{}{S_v(t^{j-1})} - \frac{i - t^{j-1}}{\nabla t^j} \nabla_{\hspace{-0.5mm}j} \hspace{0.3mm}\var{}{S_v(t^j)} &\mbox{if } t^j \leq i' \leq n \\
                0 - \frac{i - t^{j-1}}{\nabla t^j} 0 &\mbox{if } 0 \leq i' \leq t^{j-1}
            \end{array}\hspace{-1.7mm}
            \right\} \notag \\
            &\stackrel{\eqref{eq:IBRW.variance.increments}}{=} \left\{\hspace{-1mm}
            \begin{array}{ll}
                \vspace{0.5mm}\bar{\sigma}_j^2 (i - t^{j-1}) - \frac{i - t^{j-1}}{\nabla t^j} \bar{\sigma}_j^2 \nabla t^j &\mbox{if } t^j \leq i' \leq n \\
                0 &\mbox{if } 0 \leq i' \leq t^{j-1}
            \end{array}\hspace{-1.7mm}
            \right\} = 0.
        \end{align*}

        The first claim follows since $\{B_{v,i}^j\}_{i\in \{t^{j-1},\ldots,t^j\}}$ and $\{S_v(i')\}_{i'\not\in (t^{j-1},t^j)}$ form a Gaussian vector together.
        For the second claim, we need to verify $(a)$ and $(b)$ in Definition \ref{def:brownian.bridge} :

        \begin{itemize}
            \item[(a)] We obviously have $B_{v,t^{j-1}}^j = B_{v,t^j}^j = 0$ ;
            \item[(b)] For all $i,i'\in \{t^{j-1},t^{j-1}+1,\ldots,t^j\}$,
                \vspace{-3.5mm}
                \begin{align*}
                    \text{Cov}\big(B_{v,i}^j,B_{v,i'}^j\big)
                    &\stackrel{\phantom{\eqref{eq:IBRW.variance.increments}}}{=}\cov{S_v(i) - S_v(t^{j-1})}{S_v(i') - S_v(t^{j-1})} \\
                    &\hspace{8mm} - \frac{i - t^{j-1}}{\nabla t^j} \cov{\nabla S_v(t^j)}{S_v(i') - S_v(t^{j-1})} \\
                    &\hspace{14mm} - \frac{i' - t^{j-1}}{\nabla t^j} \cov{S_v(i) - S_v(t^{j-1})}{\nabla S_v(t^j)} \\
                    &\hspace{20mm} + \frac{(i - t^{j-1})(i' - t^{j-1})}{(\nabla t^j)^2} \var{}{\nabla S_v(t^j)} \\
                    &\stackrel{\eqref{eq:IBRW.variance.increments}}{=} (i \wedge i' - t^{j-1}) \bar{\sigma}_j^2 - 2\frac{(i - t^{j-1})(i' - t^{j-1})}{\nabla t^j} \bar{\sigma}_j^2 \\
                    &\hspace{8mm} + \frac{(i - t^{j-1})(i' - t^{j-1})}{(\nabla t^j)^2} \bar{\sigma}_j^2 \nabla t^j \\
                    &\stackrel{\phantom{\eqref{eq:IBRW.variance.increments}}}{=} \frac{(i \wedge i' - t^{j-1}) (t^j - i \vee i')}{\nabla t^j} \bar{\sigma}_j^2.
                \end{align*}\vspace{-7mm}
        \end{itemize}
        This ends the proof of the lemma.
    \end{proof}

    Finally, to estimate the probability that a discrete Brownian bridge stays below a logarithmic barrier such as the one defined in \eqref{eq:order2.barrier}, we adapt Proposition 1' of \cite{MR0494541}.

    \begin{lemma}[Discrete Brownian bridge estimates]\label{lem:bramson.pont.brownien}
        Let \hspace{0.3mm}$0 \hspace{-0.3mm}\leq \hspace{-0.3mm}\lambda \hspace{-0.3mm}< \hspace{-0.3mm}\lambda' \hspace{-0.3mm}\leq \hspace{-0.3mm}1$ be such that $\lambda n,\lambda' n\in \N_0$ and $\sigma > 0$. Let $(B_k)_{k=\lambda n}^{\lambda' n}$ be a discrete $\sigma$-Brownian bridge on the interval $[\lambda n,\lambda' n]$. For any constant $D = D(\lambda, \lambda',\sigma) > 0$ and the logarithmic barrier
        \begin{equation*}
            b(k) =
            \left\{\hspace{-1mm}
            \begin{array}{ll}
                0 &\mbox{if  }~ k\in \{\lambda n,\lambda' n\} \\
                D \log(k - \lambda n) &\mbox{if  }~ \lambda n < k \leq \frac{\lambda n + \lambda' n}{2} \\
                D \log(\lambda' n - k) &\mbox{if }~ \frac{\lambda n + \lambda' n}{2} < k < \lambda' n\, ,
            \end{array}
            \right.
        \end{equation*}
        there exists a constant $C = C(D,\sigma) \hspace{-0.3mm}> \hspace{-0.3mm}0$ such that for all $z \hspace{-0.3mm}> \hspace{-0.3mm}0$ and all $n \hspace{-0.3mm}\in \hspace{-0.3mm}\N$,
        \begin{equation*}
            \prob{}{B_k < b(k) + z, \ \lambda n \leq k \leq \lambda' n} \leq C \frac{(1+z)^2}{(\lambda' - \lambda) n}.
        \end{equation*}
    \end{lemma}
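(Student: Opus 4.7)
The proof adapts Bramson's argument for his Proposition 1' in \cite{MR0494541}, with routine modifications for the discrete setting. After a harmless time-shift one may assume $\lambda = 0$; set $T \circeq \lambda' n$ and $m \circeq T/2$, so that $(B_k)_{k=0}^T$ is a discrete $\sigma$-bridge from $0$ to $0$ with the barrier $b(\cdot)+z$ symmetric around $m$.

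The plan is to split the bridge at the midpoint by conditioning on $y \circeq B_m$. By the covariance computation of Definition \ref{def:brownian.bridge}, the left half $(B_k)_{0 \leq k \leq m}$ and the right half $(B_k)_{m \leq k \leq T}$ are, conditionally on $B_m = y$, independent discrete $\sigma$-bridges between their endpoints, and time-reversal identifies the right half in law with the left. Writing $f(y)$ for the probability that a $\sigma$-bridge from $0$ to $y$ on $[0,m]$ stays below $D \log k + z$ at interior integer times $k$, and $\rho$ for the Gaussian density of $B_m$ (variance $\sigma^2 T/4$), the event of interest factorises as
\begin{equation*}
    \mathbb{P}\bigl(B_k < b(k)+z,\ 0 \leq k \leq T\bigr) \leq \int_{-\infty}^{D\log m + z} f(y)^2\, \rho(y)\, dy.
\end{equation*}

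The technical heart is a one-sided ``ballot'' estimate
\begin{equation*}
    f(y) \leq \frac{C(D,\sigma)\,(1+z)\bigl(1+(z-y)_+\bigr)}{m}.
\end{equation*}
To obtain it, decompose the $0\to y$ bridge as $B_k = (k/m)y + Y_k$ with $Y$ a $0\to 0$ bridge, so that the event becomes $Y_k < D\log k + z - (k/m)y$, and then run Bramson's dyadic recursion on $Y$: successive conditioning at the times $m\cdot 2^{-j}$ and $m(1-2^{-j})$, combined with the reflection-principle formula for a Brownian bridge staying below a line, yields an estimate proportional to the product of the two endpoint values of the barrier divided by the time length. The $(1+z)$ factor (in place of a bare $z$) reflects the fact that the barrier dips only logarithmically toward zero near the endpoints, leaving the same order of room as a flat barrier of height $O(1)$.

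Substituting this bound and using that $\rho$ has standard deviation $\sigma\sqrt{T}/2$, a direct second-moment computation gives $\int \bigl(1+(z-y)_+\bigr)^2 \rho(y)\, dy = O\bigl(T + (1+z)^2\bigr)$, which yields the desired bound $C(1+z)^2/T$ whenever $(1+z)^2 \leq T$, while the complementary regime is covered by the trivial bound $\mathbb{P}(\cdot) \leq 1$. The main obstacle is the ballot-type estimate for $Y$ below the tilted logarithmic barrier: both endpoints pin $Y$ at zero and the barrier itself dips to zero there, so the entropic repulsion is delicate to control. Bramson's dyadic recursion handles precisely this, and the transcription to the discrete setting only requires minor boundary adjustments since the discrete and continuous $\sigma$-bridges share the same Gaussian covariance at integer times.
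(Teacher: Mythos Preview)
Your outline differs from the paper's proof in a significant structural way, and the step you label ``the technical heart'' is precisely where the real work lies---so invoking it via ``Bramson's dyadic recursion'' leaves the argument incomplete.

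The paper does \emph{not} condition at the midpoint. Instead it splits $[0,n]$ into thirds, writes the bridge probability as an integral via the Markov property, and then performs a change of measure from the bridge law to the \emph{unconditioned random walk} law by pulling out the factor $1/f_{S_n}(0)$. The middle piece is bounded trivially by $1$ and its transition density by $f_{S_{n/3}}(0)$; the remaining two integrals are identical by symmetry of the barrier, so the whole thing collapses to $\sqrt{3}\,\big(\mathbb{P}(S_k<\widetilde b(k)+z,\ 0\le k\le n/3)\big)^2$ for an unconditioned random walk $S$. The bridge never reappears: the only analytic input is the random-walk estimate $\mathbb{P}(S_k<D\log k+z,\ 0\le k\le t)\le C(1+z)t^{-1/2}$, which is Lemma~\ref{lem:bramson.prop.1.analogue} and is proved separately by a first-passage decomposition and gambler's-ruin bounds.

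By contrast, your midpoint conditioning keeps you in the bridge world and forces you to prove the ballot bound $f(y)\le C(1+z)(1+(z-y)_+)/m$ for a $0\to y$ bridge under a logarithmic barrier. That is a genuine statement requiring its own proof; ``dyadic recursion with the reflection-principle formula'' is a slogan rather than an argument here, and it is not how Bramson actually proves his Proposition~1'. If you want your route to work, you would still need something equivalent to Lemma~\ref{lem:bramson.prop.1.analogue} (or a bridge analogue thereof) written out in full. The paper's three-piece, measure-change device is what lets one avoid proving a bridge ballot estimate directly.
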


    In order to prove Lemma \ref{lem:bramson.pont.brownien}, we first need to prove that a random walk with Gaussian increments stays below the first part of the logarithmic barrier $b(\cdot) + z$ with probability $O(n^{-1/2})$. This is achieved through the following lemma, which is the analogue of Proposition 1 in \cite{MR0494541}.

    \begin{lemma}\label{lem:bramson.prop.1.analogue}
        Let $\sigma > 0$ and let $(S_k)_{k=0}^t$ be a discrete random walk with $\mathcal{N}(0,\sigma^2)$ increments and $S_0 \circeq 0$.
        For any constant $D = D(\lambda, \lambda',\sigma) > 0$ and the logarithmic barrier
        \begin{equation*}
            \widetilde{b}(k) =
            \left\{\hspace{-1mm}
            \begin{array}{ll}
                0 &\mbox{if  }~ k = 0 \\
                D \log k &\mbox{if  }~ 0 < k \leq t\, ,
            \end{array}
            \right.
        \end{equation*}
        there exists a constant $C = C(D,\sigma) \hspace{-0.3mm}> \hspace{-0.3mm}0$ such that for all $z \hspace{-0.3mm}> \hspace{-0.3mm}0$ and all $t \hspace{-0.3mm}\in \hspace{-0.3mm}\N$,
        \begin{equation*}
            \prob{}{S_k < \widetilde{b}(k) + z, \ 0 \leq k \leq t} \leq C \frac{(1+z)}{t^{1/2}}.
        \end{equation*}
    \end{lemma}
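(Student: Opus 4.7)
The plan is to adapt Bramson's classical argument (Proposition 1 of \cite{MR0494541}) to the discrete Gaussian random walk setting, where all one-step distributions are explicit.

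I would first condition on the endpoint $S_t = y$: since the event forces $y < D\log t + z$, one writes
\begin{equation*}
\PP\!\left(S_k < \widetilde{b}(k) + z, \, 0 \leq k \leq t\right) = \int_{-\infty}^{D\log t + z} \PP(E \mid S_t = y)\, \frac{e^{-y^{2}/(2 t \sigma^{2})}}{\sqrt{2\pi t \sigma^{2}}}\, dy,
\end{equation*}
where $E$ denotes the event in the statement. Given $S_t = y$, the process $(S_k)_{k=0}^{t}$ is a discrete $\sigma$-Brownian bridge from $0$ to $y$ on $[0,t]$ (by a direct covariance computation analogous to Lemma \ref{lem:tech.lemma.2}).

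For such a bridge, the goal is to establish a conditional estimate of the shape
\begin{equation*}
\PP(E \mid S_t = y) \leq \frac{C\,(1+z)\bigl(1 + (D\log t + z - y)^{+}\bigr)}{t\sigma^{2}},
\end{equation*}
which upon integration against the Gaussian density of $S_t$ (using the change of variables $u = D\log t + z - y$ and splitting $u \leq \sqrt{t}$ vs.\ $u > \sqrt{t}$) produces the claimed $C(1+z)/\sqrt{t}$ bound by standard Gaussian moment estimates.

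The main obstacle is obtaining the clean linear pre-factor $(1+z)$ in the conditional bridge estimate, rather than $(1+z+\log t)$: a naive application of the reflection principle treating the barrier as a constant at its maximum value $z + D\log t$ only yields $C(z + \log t)/\sqrt{t}$, which is insufficient. To remove the spurious $\log t$ factor, I would exploit the slow growth of $\widetilde{b}$ via a dyadic decomposition of $[0,t]$ into blocks $I_{j} = [2^{j-1}, 2^{j}]$, $j = 1, \ldots, \lfloor \log_2 t \rfloor$. On each $I_j$, the barrier is essentially flat at height $z + D j \log 2$ while the bridge has fluctuations of order $\sigma 2^{j/2}$; a local ballot-style reflection estimate on each $I_j$, iterated in $j$ via the Markov property, absorbs the contribution into a summable geometric series $\sum_{j}(1+j)/2^{j/2}$, leaving only the desired $(1+z)$ pre-factor and a constant depending only on $D$ and $\sigma$.
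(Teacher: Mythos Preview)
Your approach differs substantially from the paper's, and the central step is not established. You reduce the walk estimate to a conditional bridge estimate $\PP(E\mid S_t=y)\le C(1+z)\bigl(1+(D\log t+z-y)^{+}\bigr)/(t\sigma^{2})$, but this inequality is precisely the hard part, and your dyadic sketch does not prove it. Saying the blocks contribute a summable series $\sum_j(1+j)2^{-j/2}$ is not an argument: you have not specified what the ``local ballot-style reflection estimate'' on each $I_j$ actually is, how the iteration over $j$ is carried out (for a bridge, conditioning on the block endpoints $B_{2^j}$ produces a nontrivial integral over their joint law, and the bridge on $I_j$ has time-inhomogeneous variance), or why the surviving prefactor is $(1+z)$ rather than $(1+z+D\log t)$. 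In effect you are attempting to prove a bridge-under-barrier estimate in order to derive the walk-under-barrier estimate; in the paper the logic runs the other way, and the bridge result (Lemma~\ref{lem:bramson.pont.brownien}) is obtained \emph{from} the present lemma by splitting the bridge into three pieces.

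The paper's argument avoids bridges entirely and works directly on the unconditioned walk via first-passage levels. One decomposes according to the first hitting time $\tau_{z+i}$ of each integer level $z+i$, $0\le i\le \lfloor D\log t\rfloor$: either the walk never reaches $z$, which is handled by the gambler's ruin bound $\PP(\max_{k\le t}S_k<z)\le C(1+z)t^{-1/2}$, or it first crosses level $z+i$ at some time $\tau_{z+i}\ge \lfloor e^{i/D}\rfloor$ and thereafter its increments must stay below~$1$. The strong Markov property factorises each summand; splitting $\tau_{z+i}\le t/2$ versus $\tau_{z+i}>t/2$ and using a local hitting-time density bound $\PP(\tau_x=j,\,S_{\tau_x}<x+1)\le C(x+1)j^{-3/2}$ for the latter yields a contribution at most $C(z+i+1)t^{-1/2}e^{-i/(2D)}$ for each $i$. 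Summing the convergent series $\sum_i(i+1)e^{-i/(2D)}$ gives the lemma. This route uses only the Markov property of the walk and two standard random-walk inputs, and the $(1+z)$ prefactor emerges transparently from the $i=0$ gambler's ruin term.
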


    \begin{remark}
        Throughout the proofs of this article, $c, C, \widetilde{C}, etc.,$ will denote positive constants whose value can change from line to line and can depend on the parameters $(\boldsymbol{\sigma},\boldsymbol{\lambda})$. For simplicity, equations are always implicitly stated to hold for $n$ large enough when needed.
    \end{remark}

    \begin{proof}
        Let $z > 0$ and $t\in \N$.
        When $t = 1$, the statement is trivially satisfied with $C \geq 1$.
        Therefore, assume $C \geq 1$ and $t \geq 2$ for the rest of the proof.
        Let $q_t = \lfloor D \log t\rfloor$ and for all $x > 0$, let $\tau_x \circeq \inf\{k\geq 1 : S_k \geq x\}$.
        Then,
        \begin{align}\label{eq:lem:bramson.prop.1.analogue.start}
            &\prob{}{S_k < \widetilde{b}(k) + z, \ 0 \leq k \leq t} \notag \\
            &~\leq \prob{}{\max_{0 \leq k \leq t} S_k < z} + \sum_{i=0}^{q_t} ~\prob{}{
                \begin{array}{l}\hspace{-1mm}
                    \lfloor e^{i / D} \rfloor \leq \tau_{z+i} \leq t ~~\text{and} \\
                    S_{\tau_{z+i}} \hspace{-0.5mm}< z+i+1 ~~\text{and} \\
                    \max_{\tau_{z+i} \leq k \leq t} (S_k - S_{\tau_{z+i}}) < 1
                \end{array}\hspace{-1.5mm}}.
        \end{align}
        We bound the first probability in \eqref{eq:lem:bramson.prop.1.analogue.start} using a standard gambler's ruin estimate.
        Indeed, from Theorem 5.1.7 in \cite{MR2677157}, there exists a constant $C' = C'(\sigma) > 0$ such that for all $z > 0$ and all $t\in \N$,
        \begin{equation}\label{eq:lem:bramson.prop.1.gambler.ruin}
            \prob{}{\max_{0 \leq k \leq t} S_k < z} \leq C' \frac{z + 1}{t^{1/2}}.
        \end{equation}
        We proceed to the individual summands in \eqref{eq:lem:bramson.prop.1.analogue.start}. The strong Markov property for the random walk implies
        \begin{align}\label{eq:lem:bramson.prop.1.analogue.middle.1}
            &\prob{}{
                \begin{array}{l}\hspace{-1mm}
                    \lfloor e^{i / D} \rfloor \leq \tau_{z+i} \leq t ~~\text{and} \\
                    S_{\tau_{z+i}} \hspace{-0.5mm}< z+i+1 ~~\text{and} \\
                    \max_{\tau_{z+i} \leq k \leq t} (S_k - S_{\tau_{z+i}}) < 1
                \end{array}\hspace{-1.5mm}} \notag \\
            &\quad= \hspace{-2mm}\sum_{j = \lfloor e^{i/D} \rfloor}^t \hspace{-2mm} \prob{}{\tau_{z+i} = j, ~S_{\tau_{z+i}} \hspace{-0.5mm}< z+i+1} \cdot \prob{}{\max_{0 \leq k \leq t-j} S_k < 1} \notag \\
            &\quad= \hspace{-2mm} \sum_{j = \lfloor e^{i/D}\rfloor \wedge (1+ \lfloor t/2 \rfloor)}^{\lfloor t/2 \rfloor} + \sum_{j = \lfloor e^{i/D}\rfloor \vee (1+ \lfloor t/2 \rfloor)}^t \hspace{-4mm}.
        \end{align}

        Now, for the first summation in \eqref{eq:lem:bramson.prop.1.analogue.middle.1}, we have
        \begin{align}\label{eq:lem:bramson.prop.1.analogue.middle.2}
            &\hspace{-4mm}\sum_{j = \lfloor e^{i/D}\rfloor \wedge (1+ \lfloor t/2 \rfloor)}^{\lfloor t/2 \rfloor} \hspace{-7mm} \prob{}{\tau_{z+i} = j, ~S_{\tau_{z+i}} \hspace{-0.5mm}< z+i+1} \cdot \prob{}{\max_{0 \leq k \leq t-j} S_k < 1} \notag \\
            &~\quad\quad\quad\leq \prob{}{\max_{0 \leq k \leq \lfloor e^{i/D} \rfloor} S_k < z + i + 1} \cdot \prob{}{\max_{0 \leq k \leq t - \lfloor t/2 \rfloor} S_k < 1} \notag \\
            &~\quad\quad\quad\leq C \frac{z+i+1}{t^{1/2}} e^{-i/(2D)}.
        \end{align}
        We applied the estimate \eqref{eq:lem:bramson.prop.1.gambler.ruin} to both terms on the second line and we used the fact that $(z + i + 2)/(z + i + 1) \leq 2$ for all $(z,i)\in (0,\infty) \times \N$ to obtain the last inequality.

        For the second summation in \eqref{eq:lem:bramson.prop.1.analogue.middle.1}, we can use an estimate closely related to the first hitting time distribution in the gambler's ruin problem. Indeed, from Lemma 3 in \cite{MR2599428}, there exists a constant $C'' = C''(\sigma) > 0$ such that for all $x > 0$ and all $j\in \N$,
        \begin{equation}\label{eq:lem:bramson.prop.1.hitting.time}
            \prob{}{\tau_x = j, ~S_{\tau_x} \hspace{-0.5mm}< x+1}
                \leq \prob{}{
                    \begin{array}{l}\hspace{-1mm}
                        S_j \in [x,x+1] ~\text{and} \\
                        \hspace{-1mm} S_j = \max_{0 \leq k \leq j} S_k
                    \end{array}\hspace{-1mm}} \leq C'' \frac{x + 1}{j^{3/2}}.
        \end{equation}
        Using successively \eqref{eq:lem:bramson.prop.1.hitting.time}, the gambler's ruin estimate \eqref{eq:lem:bramson.prop.1.gambler.ruin}, the change of variable $j' = t - j$ and the fact that $s \mapsto s^{-1/2}$ is decreasing, we have
        \begin{align}\label{eq:lem:bramson.prop.1.analogue.middle.4}
            &\sum_{j = \lfloor e^{i/D}\rfloor \vee (1+ \lfloor t/2 \rfloor)}^t \hspace{-7mm} \prob{}{\tau_{z+i} = j, ~S_{\tau_{z+i}} \hspace{-0.5mm}< z+i+1} \cdot \prob{}{\max_{0 \leq k \leq t-j} S_k < 1} \notag \\
            &~\quad\quad\leq \sum_{j = \lfloor e^{i/D}\rfloor \vee (1+ \lfloor t/2 \rfloor)}^t \hspace{-7mm} C'' \frac{z + i + 1}{j^{3/2}} \cdot \prob{}{\max_{0 \leq k \leq t-j} S_k < 1} \notag \\
            &~\quad\quad\leq 2^{3/2} C'' \frac{z + i + 1}{t^{3/2}} \cdot \left\{1 + \sum_{j' = 1}^{\lfloor t/2 \rfloor} C' \frac{2}{(j')^{1/2}}\right\} \notag \\
            &~\quad\quad\leq 2^{3/2} C'' \frac{z + i + 1}{t^{3/2}} \cdot \left\{4 (1 + C') \int_0^t \frac{1}{2 s^{1/2}} ds\right\} = C \frac{z + i + 1}{t}.
        \end{align}

        From \eqref{eq:lem:bramson.prop.1.analogue.middle.1}, \eqref{eq:lem:bramson.prop.1.analogue.middle.2} and \eqref{eq:lem:bramson.prop.1.analogue.middle.4}, we deduce
        \begin{equation}\label{eq:lem:bramson.prop.1.analogue.middle.5}
            \prob{}{
                \begin{array}{l}\hspace{-1mm}
                    \lfloor e^{i / D} \rfloor \leq \tau_{z+i} \leq t ~~\text{and} \\
                    S_{\tau_{z+i}} \hspace{-0.5mm}< z+i+1 ~~\text{and} \\
                    \max_{\tau_{z+i} \leq k \leq t} (S_k - S_{\tau_{z+i}}) < 1
                \end{array}\hspace{-1.5mm}} \leq C^{\star} \frac{z+i+1}{t^{1/2}} e^{-i/(2D)}
        \end{equation}
        for a certain constant $C^{\star}  = C^{\star}(\sigma) > 0$, since $t^{-1/2} \leq e^{-i/(2D)}$ for all $i \leq q_t$.

        Note that $(z + i + 1) \leq (z + 1) (i+1)$ for all $i\geq 0$. Therefore, by applying \eqref{eq:lem:bramson.prop.1.analogue.middle.5} and \eqref{eq:lem:bramson.prop.1.gambler.ruin} in \eqref{eq:lem:bramson.prop.1.analogue.start}, we get
        \begin{equation*}
            \prob{}{S_k < \widetilde{b}(k) + z, \ 0 \leq k \leq t} \leq C' \frac{z + 1}{t^{1/2}} + C^{\star} \frac{z+1}{t^{1/2}}\sum_{i=0}^{q_t} (i+1) e^{-i/(2D)}.
        \end{equation*}
        The conclusion holds since $\sum_{i=0}^{\infty} (i+1) e^{-i/(2D)} < \infty$.
    \end{proof}

    Now, the proof of Lemma \ref{lem:bramson.pont.brownien} is exactly the same (except in discrete time) as the proof of Proposition 1' in \cite{MR0494541} for the case $s_0 = t$. We give the details for completeness.

    \begin{proof}[Proof of Lemma \ref{lem:bramson.pont.brownien}]
        \hspace{-0.5mm}Without loss of generality, assume that $\lambda \hspace{-0.3mm}= \hspace{-0.3mm}0$, $\lambda' \hspace{-0.3mm}= \hspace{-0.3mm}1$ and $n/3 \hspace{-0.3mm}\in \hspace{-0.3mm}\N$.
        Let $(B_k)_{k=0}^n$ be a discrete $\sigma$-Brownian bridge and let $(S_k)_{k=0}^n$ be a discrete random walk with $\mathcal{N}(0,\sigma^2)$ increments and $S_0 \circeq 0$. Denote by $P_{b_1}$, $P_{b_2}$ and $P_{b_3}$, the sets of discrete paths in $\{0,1,\ldots,n\}$ lying below the barrier $b(\cdot) + z$ on the sets $\{0,\ldots,n/3\}$, $\{n/3,\ldots,2n/3\}$ and $\{2n/3,\ldots,n\}$ respectively.
        Using the Markov property of $B$ and $S$,
        \begin{align*}
            (\star)
            &\circeq \prob{}{B_k < b(k) + z, \ 0 \leq k \leq n} \\
            &= \int_{-\infty}^{b(n/3)+z} \hspace{-1mm}\int_{-\infty}^{b(2n/3)+z} \hspace{-4mm} \prob{}{B\in P_{b_1} | B_{n/3} = x_1} f_{B_{n/3}}(x_1) \\
            &\hspace{23mm} \times \prob{}{B\in P_{b_2} | B_{2n/3} = x_2, B_{n/3} = x_1} f_{B_{2n/3} | B_{n/3}}(x_2\hspace{0.2mm}|\hspace{0.2mm}x_1) \\
            &\hspace{23mm} \times \prob{}{B\in P_{b_3} | B_{2n/3} = x_2} dx_1 dx_2 \\
            &= \frac{1}{f_{S_n}(0)} \left\{\hspace{-1mm}
                \begin{array}{l}\hspace{-1mm}
                    \vspace{1mm} \int_{-\infty}^{b(n/3)+z} \int_{-\infty}^{b(2n/3)+z} \prob{}{S\in P_{b_1} | S_{n/3} = x_1} f_{S_{n/3}}(x_1) \\
                    \vspace{1mm} \times \prob{}{S\in P_{b_2} | S_{2n/3} = x_2, S_{n/3} = x_1} f_{S_{2n/3}|S_{n/3}}(x_2\hspace{0.2mm}|\hspace{0.2mm}x_1) \\
                    \times \prob{}{S\in P_{b_3} | S_n = 0, S_{2n/3} = x_2} f_{S_n|S_{2n/3}}(0\hspace{0.2mm}|\hspace{0.2mm}x_2) ~dx_1 dx_2
                \end{array}\hspace{-1mm}
                \hspace{-1mm}\right\}.
        \end{align*}
        But $\prob{}{S\in P_{b_2} | S_{2n/3} = x_2, S_{n/3} = x_1} \leq 1$, $f_{S_{2n/3}|S_{n/3}}(x_2\hspace{0.2mm}|\hspace{0.2mm}x_1) \leq f_{S_{n/3}}(0)$ and $f_{S_{n/3}}(0) / f_{S_n}(0) = \sqrt{3}$, so
        \begin{align*}
            (\star)
            &\leq \sqrt{3} \left\{\hspace{-1mm}
                \begin{array}{l}\hspace{-1mm}
                    \vspace{1mm} \int_{-\infty}^{b(n/3)+z} \prob{}{S\in P_{b_1} | S_{n/3} = x_1} f_{S_{n/3}}(x_1) d x_1 \\
                    \times \int_{-\infty}^{b(2n/3)+z} \prob{}{\hspace{-0.5mm}
                        S\in P_{b_3} \hspace{-0.5mm}\left|\hspace{-1.5mm}
                        \begin{array}{l}
                            S_n = 0, \\
                            S_{2n/3} = x_2
                        \end{array}
                        \right.\hspace{-2mm}}
                        f_{S_n|S_{2n/3}}(0\hspace{0.2mm}|\hspace{0.2mm}x_2) dx_2
                \end{array}\hspace{-1mm}
                \hspace{-1mm}\right\}.
        \end{align*}
        By the symmetry of $b(\cdot)$ around $n/2$, both integrals are exactly the same.
        Thus, the right-hand side is equal to
        \begin{equation*}
            \sqrt{3} \Big(\mathbb{P}\big(S_k < \widetilde{b}(k) + z, \ 0 \leq k \leq n/3\big)\Big)^2.
        \end{equation*}
        The conclusion follows directly from Lemma \ref{lem:bramson.prop.1.analogue}.
    \end{proof}

    \subsection{Why Restriction \ref{eq:IBRW.restriction} ?}\label{sec:discussion}

    Let $\pi_j\in \{0,1,\ldots,M\}$ denote the indice such that $\lambda_{\pi_j} = \lambda^j$.
    When the continuous and piecewise linear functions $\mathcal{J}_{\sigma^2}$ and $\mathcal{J}_{\bar{\sigma}^2}$ coincide on a subinterval of $[\lambda^{j-1},\lambda^j]$, they either coincide
    \begin{enumerate}
        \item everywhere on $[\lambda^{j-1},\lambda^j]$;
        \item everywhere on the left and right end, meaning on $[\lambda^{j-1},\lambda_{\pi_{j-1}+1}]$ and $[\lambda_{\pi_j-1},\lambda^j]$ respectively, but not somewhere in $(\lambda_{\pi_{j-1}+1},\lambda_{\pi_j-1})$;
        \item everywhere on the left end, but not on the right end;
        \item everywhere on the right end, but not on the left end.
    \end{enumerate}
    Imposing Restriction \ref{eq:IBRW.restriction} means that we only deal with the first case. The only reason we do this is to avoid overburdening the notation in the proof of Theorem \ref{thm:IBRW.order2.restricted} by dividing each interval $[t^{j-1},t^j]$, $j\in \mathcal{A}_m$, in three parts like we did in the proof of Lemma \ref{lem:bramson.pont.brownien}.

    From Lemma \ref{lem:bramson.prop.1.analogue}, the probability that the left (resp. right) end of a Brownian bridge stays below the left (resp. right) end of the logarithmic barrier $b(\cdot) + z$ is $O(n^{-1/2})$. The probability that the middle part of the Brownian bridge stays below the middle part of the logarithmic barrier is $O(1)$. Thus, it should now be obvious how to modify the statement of Theorem \ref{thm:IBRW.order2.restricted} when there is no restriction. Simply replace $2\cdot\delta_j$ by $\delta_j^{\text{left}} + \delta_j^{\text{right}}$, where
    \begin{align*}
        \delta_j^{\text{left}} &\circeq
        \left\{\hspace{-1mm}
        \begin{array}{ll}
            1 ~&\mbox{when } \mathcal{J}_{\sigma^2} ~\text{and } \mathcal{J}_{\bar{\sigma}^2} ~\text{coincide on } [\lambda^{j-1},\lambda_{\pi_{j-1}+1}] \\
            0 ~&\mbox{otherwise}
        \end{array}
        \right. \\
        \delta_j^{\text{right}} &\circeq
        \left\{\hspace{-1mm}
        \begin{array}{ll}
            1 ~&\mbox{when } \mathcal{J}_{\sigma^2} ~\text{and } \mathcal{J}_{\bar{\sigma}^2} ~\text{coincide on } [\lambda_{\pi_j - 1},\lambda^j] \\
            0 ~&\mbox{otherwise}.
        \end{array}
        \right.
    \end{align*}
    For confirmation, the reader is referred to Theorem 1.4 in \cite{MR3361256}, where a more general statement is given.

    \subsection{Second order of the maximum and tension}\label{sec:IBRW.order2}

    Theorem \ref{thm:IBRW.order2.restricted} is a direct consequence of Lemma \ref{lem:IBRW.order2.upper.bound.restricted}, which proves the exponential decay of the probability that the recentered maximum is above a certain level, and Lemma \ref{lem:IBRW.order2.lower.bound.restricted}, which shows the corresponding lower bound.

    \begin{lemma}[Upper bound]\label{lem:IBRW.order2.upper.bound.restricted}
        Let $\{S_v\}_{v\in \D_n}$ be the $(\bb{\sigma},\bb{\lambda})$-BRW at time $n$ of Definition \ref{def:IBRW}, \hspace{-0.25mm}under Restriction \ref{eq:IBRW.restriction}. \hspace{-0.25mm}Recall the definition of $M_n^{\star}$ from \eqref{eq:IBRW.order2.optimal.path}. There exists a constant $C = C(\bb{\sigma},\bb{\lambda}) > 0$ such that for all $x > 0$,
        \vspace{1.3mm}
        \begin{equation*}
            \prob{}{\max_{v\in \D_n} S_v \geq M_n^{\star}(n) + x} \hspace{-0.7mm}\leq  C \hspace{0.5mm}(1\hspace{-0.3mm} + \hspace{-0.3mm}x)^{2 \sum_{j=1}^m \delta_j} e^{-x \frac{g}{\bar{\sigma}_1}} \vspace{1.3mm}
        \end{equation*}
        for $n$ large enough, where $\delta_j \circeq \bb{1}_{\{j\in \mathcal{A}_m\}}$.
    \end{lemma}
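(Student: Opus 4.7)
The plan is to reduce the bound to a first-moment computation over trajectories respecting the barrier $M_{n,x}^\star$ on $\mathcal{T}_m\setminus\{n\}$, with a separate control for trajectories that violate it. I would decompose
\[
\mathbb{P}\Bigl(\max_{v\in\mathcal{D}_n}S_v\geq M_n^\star(n)+x\Bigr)\;\leq\;\mathbb{P}(\mathcal{E}_{n,x})+\mathbb{P}(\mathcal{F}_{n,x}),
\]
where $\mathcal{E}_{n,x}$ is the event that some $v\in\mathcal{D}_n$ satisfies both $S_v(n)\geq M_n^\star(n)+x$ and $S_v(k)\leq M_{n,x}^\star(k)$ for every $k\in\mathcal{T}_m\setminus\{n\}$, and $\mathcal{F}_{n,x}$ is the complementary event.

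For $\mathbb{P}(\mathcal{E}_{n,x})$, apply a union bound (factor $2^n$), condition on the effective-scale values $\{S_v(t^j)\}_{j=0}^m$, and change variables to the slacks $a_j\circeq M_n^\star(t^j)+x-S_v(t^j)$, so that $a_0=x$, $a_j\geq 0$ for $j<m$, and $a_m\leq 0$. By Lemma \ref{lem:tech.lemma.2}, given these endpoints, the Brownian bridges $\{B_{v,\cdot}^j\}_{j\in\mathcal{A}_m}$ are mutually independent and independent of the endpoints, so the barrier constraint inside each interval $[t^{j-1},t^j]$, $j\in\mathcal{A}_m$, is estimated via Lemma \ref{lem:bramson.pont.brownien} applied to the appropriately shifted barrier, giving at most $C(1+\max(a_{j-1},a_j))^2/\nabla t^j$. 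A direct expansion of the Gaussian density of $\nabla S_v(t^j)$ at the value $(M_n^\star(t^j)-M_n^\star(t^{j-1}))+(a_{j-1}-a_j)$, using the precise log corrections built into $M_n^\star$, yields
\[
\text{density}\;\leq\;C\,(\nabla t^j)^{\delta_j}\,2^{-\nabla t^j}\,e^{-g(a_{j-1}-a_j)/\bar{\sigma}_j},
\]
after absorbing the $(2\pi\bar{\sigma}_j^2\nabla t^j)^{-1/2}$ prefactor. Multiplying over $j$, the $2^{-n}$ cancels the union bound and the powers $(\nabla t^j)^{\delta_j}/\nabla t^j$ for $j\in\mathcal{A}_m$ cancel exactly.

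The remaining integration over $(a_1,\ldots,a_m)$ is driven by Abel summation,
\[
\sum_{j=1}^m \frac{g(a_{j-1}-a_j)}{\bar{\sigma}_j}\;=\;\frac{gx}{\bar{\sigma}_1}+\sum_{j=1}^{m-1}g\Bigl(\tfrac{1}{\bar{\sigma}_{j+1}}-\tfrac{1}{\bar{\sigma}_j}\Bigr)a_j-\frac{ga_m}{\bar{\sigma}_m}.
\]
Since $\bar{\sigma}_1>\cdots>\bar{\sigma}_m$ by the very construction of the effective scales, every coefficient of the intermediate $a_j$'s is strictly positive and the $a_m$-term gives exponential decay as $a_m\to-\infty$. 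Using the crude bound $(1+\max(a_{j-1},a_j))^2\leq(1+a_{j-1})^2(1+a_j)^2$ and integrating out $a_m,\ldots,a_1$ in that order yields a finite constant times a polynomial in $a_0=x$ of degree at most $2\sum_{j=1}^m\delta_j$, which produces $(1+x)^{2\sum\delta_j}$. Combined with the factor $e^{-gx/\bar{\sigma}_1}$ from the $a_0$-term of the Abel sum, this gives the required bound on $\mathbb{P}(\mathcal{E}_{n,x})$.

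For the remainder $\mathbb{P}(\mathcal{F}_{n,x})$, decompose by the first $k^\star\in\mathcal{T}_m\setminus\{n\}$ at which the barrier is crossed. A union bound over the ancestor $u\in\mathcal{D}_{k^\star}$ and the independence of the sub-BRW rooted at $u$ reduce the problem to estimating the joint probability that the barrier holds at all $t^{j'}$, $j'<\vartheta_{k^\star}$, and is exceeded at $k^\star$, times a sub-BRW maximum probability which is trivially bounded by $1$. The first factor is handled exactly as in the main term, extending the slack change of variables through $j=\vartheta_{k^\star}$ with $a_{\vartheta_{k^\star}}<0$ and absorbing the overshoot via $\int_0^\infty e^{-g\Delta/\bar{\sigma}_{\vartheta_{k^\star}}}d\Delta<\infty$. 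The main obstacle lies precisely here: a naive first moment at $k^\star$ alone is inadequate because the Cauchy-Schwarz inequality $(\sum_{j\leq\vartheta_{k^\star}}\bar{\sigma}_j\nabla t^j)^2\leq t^{\vartheta_{k^\star}}\sum_{j\leq\vartheta_{k^\star}}\bar{\sigma}_j^2\nabla t^j$ is strict whenever $\bar{\sigma}$ drops in $[0,\lambda^{\vartheta_{k^\star}}]$, so one must impose the barrier constraints at all earlier effective times simultaneously with the violation at $k^\star$ to recover the correct order of the bound.
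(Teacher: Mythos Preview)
Your overall strategy coincides with the paper's: decompose by the first time in $\mathcal{T}_m$ at which the barrier $M_{n,x}^{\star}$ is crossed, apply a union bound at that time, condition on the effective-scale increments, use the bridge estimates of Lemma~\ref{lem:bramson.pont.brownien} on the intervals with $\delta_j=1$, and exploit the strict ordering $\bar\sigma_1>\cdots>\bar\sigma_m$ to integrate out (your Abel summation is exactly the mechanism producing $e^{-gx/\bar\sigma_1}$). Your $\mathcal{E}_{n,x}$ is the paper's $k=n$ term and your $\mathcal{F}_{n,x}$ unfolds into the paper's $k<n$ terms, so the architecture is the same.

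The genuine gap is in your handling of $\mathcal{F}_{n,x}$ when the first crossing time $k^\star$ lies in the \emph{interior} of some $(t^{l-1},t^l)$ with $l\in\mathcal{A}_m$. There are $O(n)$ such times, so after the union bound at level $2^{k^\star}$ you need a bound that is summable in $k^\star$; the integral $\int_0^\infty e^{-g\Delta/\bar\sigma_l}\,d\Delta<\infty$ you invoke only controls the overshoot at a \emph{fixed} $k^\star$, not the sum. In the paper this summability is produced by the precise logarithmic shape of $b_n$: when one expands the Gaussian tail $\mathbb{P}\big(S_v(k^\star)-S_v(t^{l-1})\geq M_{n,x}^{\star}(k^\star)-x_{l-1}\big)$, the bump $b_n(k^\star)$ in the threshold yields an extra factor $h_l(k^\star)$ of order $(k^\star-t^{l-1})^{-3/2}$ on the left half and $(t^l-k^\star)^{-5/2}$ on the right half, and it is $\sum_{k^\star}h_l(k^\star)<\infty$ that closes the argument. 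In addition, on the partial interval $[t^{l-1},k^\star]$ one needs a further bridge estimate (the paper's term~(3)) together with a comparison showing that $b_n(i)-\tfrac{i-t^{l-1}}{k^\star-t^{l-1}}b_n(k^\star)$ is itself dominated by a logarithmic barrier on $[t^{l-1},k^\star]$; this is where the specific constant $5/2$ in \eqref{eq:order2.barrier} is used, and where the factor $1/(k^\star-t^{l-1})$ from the bridge must cancel against a matching $(k^\star-t^{l-1})$ from the Gaussian tail on the right half. Your phrase ``extending the slack change of variables through $j=\vartheta_{k^\star}$'' treats $k^\star$ as though it were an effective time $t^l$ and bypasses both of these points, so as written the sum over interior $k^\star$ is not shown to converge.
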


    The proof of Lemma \ref{lem:IBRW.order2.upper.bound.restricted} is separated in two parts with a technical lemma in between them (Lemma \ref{lem:tech.lemma.1}).

    \begin{proof}[Proof of Lemma \ref{lem:IBRW.order2.upper.bound.restricted} (first part)]
        Define the set of particles that are above the path $M_{n,x}^{\star}$ at time $k$ :
        \begin{equation*}
            \mathcal{H}_{k,n,x} \circeq \{v\in \D_k : S_v(k) \geq M_{n,x}^{\star}(k)\}, \ \ k\in \mathcal{T}_m.
        \end{equation*}
        The idea of the proof is to split the probability that at least one particle at time $n$ exceeds $M_{n,x}^{\star}(n)$ by looking at the first time $k\in \mathcal{T}_m$ when the set $\mathcal{H}_{k,n,x}$ is non-empty. Using sub-additivity, we have the following upper bound on the probability of the lemma :
        \begin{align}\label{eq:lem:IBRW.order2.upper.bound.debut}
            \prob{}{|\mathcal{H}_{n,n,x}| \geq 1}
            &\leq \sum_{k\in \mathcal{T}_m} \prob{}{\hspace{-1mm}
                \begin{array}{l}
                    |\mathcal{H}_{k,n,x}| \geq 1 ~\text{and } |\mathcal{H}_{i,n,x}| = 0 \\
                    \forall i\in \mathcal{T}_m ~\text{such that } i<k
                \end{array}\hspace{-1mm}
                } \notag \\
            &\leq \sum_{k\in \mathcal{T}_m} 2^k \hspace{0.5mm}\max_{v\in \D_k} \hspace{0.5mm}\prob{}{\hspace{-1mm}
                \begin{array}{l}
                    S_v(k) \geq M_{n,x}^{\star}(k) \\
                    \text{and } S_v(i) < M_{n,x}^{\star}(i) \\
                    \forall i\in \mathcal{T}_m ~\text{such that } i<k
                \end{array}\hspace{-1mm}
                }.
        \end{align}
        {\fontdimen2\font=0.28em
        We only discuss the case $k > t^1$ from hereon. The case $k \leq t^1$ is easier (there is no conditioning in \eqref{eq:lem:IBRW.order2.upper.bound.integral}), \hspace{0.5mm}so we omit the details.
        \hspace{0.5mm}Fix \hspace{0.8mm}$l \hspace{-0.5mm}\in \hspace{-0.5mm}\{2,\ldots,m\}$ \hspace{0.5mm}and \hspace{0.7mm}$t^{l-1} \hspace{-0.8mm}< k \leq t^l$ \hspace{0.5mm}for the remaining of the proof.
        By conditioning on the event}
        \begin{equation*}
            E_v \circeq \{(S_v(t^1),\ldots,S_v(t^{l-1})) = (x_1,\ldots,x_{l-1}) \circeq \boldsymbol{x}\},
        \end{equation*}
        the probability in the maximum in \eqref{eq:lem:IBRW.order2.upper.bound.debut} is equal to
        \begin{equation}\label{eq:lem:IBRW.order2.upper.bound.integral}
            \int_{-\infty}^{M_{n,x}^{\star}(t^1)} \hspace{-4mm}\ldots \int_{-\infty}^{M_{n,x}^{\star}(t^{l-1})} \underbrace{\mathbb{P}\hspace{-0.5mm}\left(
                \left.\hspace{-1mm}
                \begin{array}{l}
                    S_v(k) \geq M_{n,x}^{\star}(k) \\
                    \text{and } S_v(i) < M_{n,x}^{\star}(i) \\
                    \forall i\in \mathcal{T}_m ~\text{such that } i < k
                \end{array}
                \hspace{-1mm}\right| E_v\hspace{-1mm}
                \right)}_{\circeq ~(\clubsuit)} \hspace{0.5mm}f_v(\boldsymbol{x}) ~d \boldsymbol{x},\vspace{-1mm}
        \end{equation}
        where $f_v$ is the density function of $(S_v(t^1),\ldots,S_v(t^{l-1}))$.

        Now, make the convenient change of variables
        \begin{equation*}
            Y_{v,j} \circeq \nabla S_v(t^j) - \nabla M_n^{\star}(t^j), \ \ \ j\in \{1,\ldots,l-1\}.
        \end{equation*}
        By the independence of the increments, the density of the vector $(S_v(t^j))_{j=1}^{l-1}$ is the product of the densities of the $Y_{v,j}$'s, namely
        \begin{equation*}
            f_v(\boldsymbol{x}) \circeq f_v(x_1,\ldots,x_{l-1}) = f_{Y_{v,1}}(y_1) \cdot \ldots \cdot f_{Y_{v,l-1}}(y_{l-1})\, .
        \end{equation*}
        Since $\var{}{Y_{v,j}} = \var{}{\nabla S_v(t^j)} = \bar{\sigma}_j^2 \nabla t^j$, we can bound each density :
        \begin{align*}
            f_{Y_{v,j}}(y_j)
            = \frac{e^{-\frac{\left(y_j + \nabla M_n^{\star}(t^j)\right)^2}{2 \bar{\sigma}_j^2 \nabla t^j}}}{\sqrt{2\pi} \sqrt{\bar{\sigma}_j^2 \nabla t^j}}
            &\leq C 2^{-\nabla t^j} \frac{e^{\frac{(1 + 2\cdot \delta_j)}{2} \log(\nabla t^j)}}{\sqrt{\nabla t^j}} e^{-y_j \frac{g}{\bar{\sigma}_j}} \notag \\
            &= C 2^{-\nabla t^j} (\nabla t^j)^{\delta_j} \hspace{0.5mm}e^{-y_j \frac{g}{\bar{\sigma}_j}}.
        \end{align*}
        We deduce that the integral in \eqref{eq:lem:IBRW.order2.upper.bound.integral} is smaller than
        \begin{equation}\label{eq:lem:IBRW.order2.upper.bound.integral.2}
            C 2^{-t^{l-1}} \int_{-\infty}^x \int_{-\infty}^{x - y_1} \hspace{-4mm}\ldots \int_{-\infty}^{x - \sum_{j=1}^{l-2} y_j} (\clubsuit) \cdot \prod_{j=1}^{l-1} (\nabla t^j)^{\delta_j} \hspace{0.5mm}e^{-y_j \frac{g}{\bar{\sigma}_j}} ~d \boldsymbol{y}.
        \end{equation}

        From Lemma \ref{lem:tech.lemma.2}, we know that for all $j\in \mathcal{A}_{l-1}$, the process
        \begin{equation}\label{eq:lem:IBRW.order2.upper.bound.brownian.bridge.1}
            B_{v,i}^j \circeq S_v(i) - S_v(t^{j-1}) - \frac{i - t^{j-1}}{\nabla t^j} \nabla S_v(t^j), \ \ t^{j-1} \leq i \leq t^j,
        \end{equation}
        is independent of $\{S_v(i')\}_{i'\not\in (t^{j-1},t^j)}$ and defines a discrete $\bar{\sigma}_j$-Brownian bridge. Similarly, when $l\in \mathcal{A}_m$, the process
        \begin{equation}\label{eq:lem:IBRW.order2.upper.bound.brownian.bridge.2}
            B_{v,i} \circeq S_v(i) - S_v(t^{l-1}) - \frac{i - t^{l-1}}{k - t^{l-1}} (S_v(k) - S_v(t^{l-1})), \ \ t^{l-1} \leq i \leq k,
        \end{equation}
        is independent of $\{S_v(i')\}_{i'\not\in (t^{l-1},k)}$ and defines a discrete $\bar{\sigma}_l$-Brownian bridge.

        Using the independence of $S_v(k) - S_v(t^{l-1})$ with respect to $(S_v(t^j))_{j=1}^{l-1}$ and the processes in \eqref{eq:lem:IBRW.order2.upper.bound.brownian.bridge.1} and \eqref{eq:lem:IBRW.order2.upper.bound.brownian.bridge.2}, we get
        \begin{align}\label{eq:lem:IBRW.order2.upper.bound.clubsuit}
            \quad(\clubsuit)
            &\leq \prob{}{S_v(k) - S_v(t^{l-1}) \geq M_{n,x}^{\star}(k) - x_{l-1}} \notag \\
            &\quad\times \hspace{-2mm}\prod_{j\in \mathcal{A}_{l-1}} \hspace{-2mm}\prob{}{\hspace{-1mm}
                \begin{array}{l}
                    B_{v,i}^j \hspace{-0.5mm}< \hspace{-0.5mm}M_{n,x}^{\star}(i) - x_{j-1} - \frac{i - t^{j-1}}{\nabla t^j} \nabla x_j \\
                    \text{for all } i ~\text{such that } \hspace{0.5mm}t^{j-1} \hspace{-0.5mm}< \hspace{-0.5mm}i \hspace{-0.5mm}< \hspace{-0.5mm}t^j
                \end{array}\hspace{-1mm}
                } \notag \\
            &\quad\times \prob{}{\hspace{-1mm}
                \begin{array}{l}
                    B_{v,i} \hspace{-0.5mm}< \hspace{-0.5mm}(M_{n,x}^{\star}(i) - x_{l-1}) - \frac{i - t^{l-1}}{k - t^{l-1}} (M_{n,x}^{\star}(k) - x_{l-1}) \\
                    \text{for all } i ~\text{such that } \hspace{0.5mm}t^{l-1} \hspace{-0.5mm}< \hspace{-0.5mm}i \hspace{-0.5mm}< \hspace{-0.5mm}k
                \end{array}\hspace{-1mm}
                }^{\hspace{-1mm}\boldsymbol{1}_{\{l\in \mathcal{A}_m\}}} \notag \\
            &\circeq (1) \times \hspace{-2mm}\prod_{j\in\mathcal{A}_{l-1}} \hspace{-2mm}(2)_j \times (3).
        \end{align}
        We bound $(1)$ using a Gaussian estimate, and $(2)_j$ and $(3)$ using the Brownian bridge estimates of Lemma \ref{lem:bramson.pont.brownien}.
        We pause the proof of Lemma \ref{lem:IBRW.order2.upper.bound.restricted} to state and prove these bounds in Lemma \ref{lem:tech.lemma.1}.
        \phantom\qedhere
    \end{proof}

    \begin{lemma}\label{lem:tech.lemma.1}
        Let $l\in \{2,\ldots,m\}$ and $t^{l-1} < k \leq t^l$.
        As in \eqref{eq:lem:IBRW.order2.upper.bound.integral}, we make the change of variables
        \begin{equation}\label{eq:change.of.variables}
            Y_{v,j} \circeq \nabla S_v(t^j) - \nabla M_n^{\star}(t^j), \ \ \ j\in \{1,\ldots,l-1\}.
        \end{equation}
        In \eqref{eq:lem:IBRW.order2.upper.bound.clubsuit}, there exist constants $C,D > 0$, only depending on $(\bb{\sigma},\bb{\lambda})$, such that for $n$ large enough,
        \begin{equation}\label{eq:bound.on.(1)}
            (1)\leq C 2^{-(k - t^{l-1})} h_l(k) (k - t^{l-1})^{\bb{1}_{\{l\in \mathcal{A}_m \, \text{and } (t^{l-1} + t^l)/2 < k \leq t^l\}}} e^{-\frac{x - \sum_{j=1}^{l-1} y_j}{g^{-1} \bar{\sigma}_l}}
        \end{equation}
        where
        \begin{equation*}
            h_l(k) \circeq \left\{\hspace{-1mm}
            \begin{array}{ll}
                (k - t^{l-1})^{-3/2} ~&\mbox{when } l\in \mathcal{A}_m ~\text{and } t^{l-1} < k \leq \frac{t^{l-1} + t^l}{2} \\
                (t^l - k)^{-5/2} ~&\mbox{when } l\in \mathcal{A}_m ~\text{and } \frac{t^{l-1} + t^l}{2} < k < t^l \\
                1 ~&\mbox{when } k = t^l,
            \end{array}\right.
        \end{equation*}
        and
        \begin{equation}\label{eq:bound.on.(2)j}
            (2)_j \leq C \frac{(1 + D + 2x - 2\sum_{j'=1}^{j-1} y_{j'} - y_j)^2}{\nabla t^j}, \ \ \ j\in \mathcal{A}_{l-1}\, ,
        \end{equation}
        and
        \begin{equation}\label{eq:bound.on.(3)}
            (3) \leq
            \left\{\hspace{-1.5mm}
            \begin{array}{ll}
                C \frac{(1 + D + 2x - 2\sum_{j'=1}^{l-2} y_{j'} - y_{l-1})^2}{k - t^{l-1}} &\mbox{if } l\in \mathcal{A}_m \hspace{0.5mm}\text{and } \frac{t^{l-1} + t^l}{2} < k \leq t^l \\
                1 &\mbox{otherwise}.
            \end{array}
            \right.
        \end{equation}
    \end{lemma}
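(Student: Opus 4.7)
The plan is to establish the three bounds separately, since each one uses a different tool: bound \eqref{eq:bound.on.(1)} follows from the Gaussian estimate of Lemma \ref{lem:gaussian.estimates} applied to the single increment $S_v(k)-S_v(t^{l-1})$, while bounds \eqref{eq:bound.on.(2)j} and \eqref{eq:bound.on.(3)} are both applications of the discrete Brownian bridge estimate of Lemma \ref{lem:bramson.pont.brownien}.

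For \eqref{eq:bound.on.(1)}, the piecewise-linear structure of $M_n^{\star}$ between consecutive effective times, together with the change of variables \eqref{eq:change.of.variables}, rewrites the threshold as
\begin{equation*}
    M_{n,x}^{\star}(k) - x_{l-1} = g\bar{\sigma}_l \Delta - \tfrac{(1+2\delta_l)\bar{\sigma}_l}{2g}\tfrac{\Delta}{\nabla t^l}\log(\nabla t^l) + b_n(k) + x - \sum_{j=1}^{l-1} y_j ,
\end{equation*}
with $\Delta \circeq k - t^{l-1}$, and the variance of $S_v(k) - S_v(t^{l-1})$ equals $\bar{\sigma}_l^2 \Delta$: this follows from Restriction \ref{eq:IBRW.restriction} when $l \in \mathcal{A}_m$, and from $\mathcal{J}_{\sigma^2}(\lambda^l) = \mathcal{J}_{\bar{\sigma}^2}(\lambda^l)$ when $l \notin \mathcal{A}_m$, in which case the only admissible $k \in \mathcal{T}_m$ is $t^l$ itself. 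I would then apply Lemma \ref{lem:gaussian.estimates}, write the threshold as $A + B$ with $B \circeq x - \sum y_j > 0$ (positive thanks to the integration domain), and expand $(A+B)^2 / (2\bar{\sigma}_l^2 \Delta) \geq A^2/(2\bar{\sigma}_l^2 \Delta) + AB/(\bar{\sigma}_l^2 \Delta)$. The principal factor $e^{-A^2/(2\bar{\sigma}_l^2 \Delta)}$ produces $2^{-\Delta}$ multiplied by polynomial corrections in $\Delta$ and $\nabla t^l$ coming from the $\log(\nabla t^l)$ term and from $b_n(k)$, the cross-term produces exactly $e^{-gB/\bar{\sigma}_l}$, and the Gaussian prefactor $\bar{\sigma}_l \sqrt{\Delta}/(\sqrt{2\pi} z)$ adds an extra $\Delta^{-1/2}$. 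A four-way case analysis on whether $l \in \mathcal{A}_m$ and on the location of $k$ in $[t^{l-1}, t^l]$ then reproduces the advertised $h_l(k) (k - t^{l-1})^{\mathbf{1}_{\{\ldots\}}}$ and the factor $e^{-(x - \sum y_j)/(g^{-1}\bar{\sigma}_l)}$.

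For \eqref{eq:bound.on.(2)j} with $j \in \mathcal{A}_{l-1}$, the linearity of $M_n^{\star}$ on $[t^{j-1}, t^j]$ together with the change of variables absorbs the drift $\nabla M_n^{\star}(t^j)$, reducing the upper barrier for the $\bar{\sigma}_j$-bridge $B_{v,\cdot}^j$ to $b_n(i) + \psi_j(i)$ with $\psi_j(i) \circeq x - \sum_{j'<j} y_{j'} - \tfrac{i-t^{j-1}}{\nabla t^j} y_j$. Since $j \in \mathcal{A}_{l-1}$, the function $b_n(\cdot)$ on $[t^{j-1}, t^j]$ coincides with the symmetric log barrier of Lemma \ref{lem:bramson.pont.brownien} with constant $5\bar{\sigma}_j/(2g)$, and $\max_i \psi_j(i) \leq (x - \sum_{j'<j} y_{j'}) + \max(0, -y_j)$; the integration constraint $\sum_{j'=1}^{j} y_{j'} < x$ then forces this maximum to be $\leq D + 2x - 2\sum_{j'<j} y_{j'} - y_j$ for a suitable constant $D > 0$, and Lemma \ref{lem:bramson.pont.brownien} delivers the bound directly. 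Bound \eqref{eq:bound.on.(3)} is trivial ($(3) \leq 1$) when $l \notin \mathcal{A}_m$ or $k \leq (t^{l-1}+t^l)/2$; in the remaining case $l \in \mathcal{A}_m$ with $k > (t^{l-1}+t^l)/2$, an analogous argument handles the bridge $B_{v,\cdot}$ on $[t^{l-1}, k]$, the precise role of the restriction $k > (t^{l-1}+t^l)/2$ being to provide enough room on the right for the barrier $b_n$ restricted to $[t^{l-1},k]$ to be dominated by the shape required by Lemma \ref{lem:bramson.pont.brownien}, and the shift analysis mirrors that of $(2)_j$ with $j$ replaced by $l-1$.

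The main obstacle is the bookkeeping in \eqref{eq:bound.on.(1)}: the log correction coming from $M_n^{\star}$, the log barrier $b_n(k)$, the fractional exponent $\Delta/\nabla t^l$, and the $\Delta^{-1/2}$ from the Gaussian prefactor all have to collapse to exactly $h_l(k)(k - t^{l-1})^{\mathbf{1}_{\{\ldots\}}}$ in each of the four sub-cases. The right-half regime $(t^{l-1}+t^l)/2 < k < t^l$ is particularly delicate, since the $\log(t^l - k)$ contribution of $b_n(k)$ must cooperate with the other corrections to produce the $(t^l - k)^{-5/2}$ factor of $h_l(k)$.
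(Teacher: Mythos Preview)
Your approach is essentially the paper's: Gaussian tail estimate for $(1)$, and Lemma \ref{lem:bramson.pont.brownien} for $(2)_j$ and $(3)$. For $(1)$ and $(2)_j$ your sketch matches the paper's computations closely; in particular your reduction of the barrier for $B_{v,\cdot}^j$ to $b_n(i)+\psi_j(i)$ and the bound $\max_i\psi_j(i)\leq 2x-2\sum_{j'<j}y_{j'}-y_j$ via the integration constraints is exactly the paper's argument, just organized slightly differently.

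The genuine gap is in $(3)$. You say ``an analogous argument handles the bridge $B_{v,\cdot}$ on $[t^{l-1},k]$'' and that the shift analysis mirrors $(2)_j$. The shift part is indeed analogous, but the barrier part is not. In $(2)_j$ the residual barrier is $b_n(i)$ itself, which on $[t^{j-1},t^j]$ is already the symmetric logarithmic shape required by Lemma \ref{lem:bramson.pont.brownien}. In $(3)$ the residual barrier is
\[
b_n(i)\;-\;\frac{i-t^{l-1}}{k-t^{l-1}}\,b_n(k),\qquad t^{l-1}<i<k,
\]
because the bridge is anchored at $k<t^l$ where $b_n(k)\neq 0$. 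This expression is \emph{not} of the form $\widetilde D\log\bigl(\min(i-t^{l-1},k-i)\bigr)$: the first piece of $b_n$ is a log in $i-t^{l-1}$, the second piece is a log in $t^l-i$ (not $k-i$), and the subtracted linear term mixes in $\log(t^l-k)$. The paper handles this with a three-case analysis according to the position of $i$ relative to $(t^{l-1}+k)/2$ and $(t^{l-1}+t^l)/2$, using in the middle and right cases the elementary inequalities $\frac{\log(t^l-k)}{k-t^{l-1}}\leq \frac{\log(e\vee(k-i))}{k-i}$ (monotonicity of $(\log x)/x$) and $t^l-i\leq 2(t^l-k)(k-i)$, to show the barrier is dominated by $\widetilde D\log(i-t^{l-1})$ on the left half and $\widetilde D\log(k-i)$ on the right half, up to an additive constant. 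Your phrase ``enough room on the right'' correctly locates the issue but does not substitute for this computation; without it the appeal to Lemma \ref{lem:bramson.pont.brownien} is not justified.
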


    \begin{proof}[Proof \hspace{-0.3mm}of \hspace{-0.3mm}inequality \hspace{-0.3mm}\eqref{eq:bound.on.(1)}]
        \hspace{-1mm}
        Since $\var{}{S_v(k) - S_v(t^{l-1})} = (k - t^{l-1}) \bar{\sigma}_l^2$ when $k\in \mathcal{T}_m$, a Gaussian estimate yields
        \vspace{-1mm}
        \begin{align}\label{eq:lem:IBRW.order2.upper.bound.gaussian.estimate}
            (1)
            &\circeq \prob{}{S_v(k) - S_v(t^{l-1}) \geq M_{n,x}^{\star}(k) - x_{l-1}} \notag \\
            &\leq \frac{\sqrt{(k - t^{l-1}) \bar{\sigma}_l^2}}{\sqrt{2\pi} (M_{n,x}^{\star}(k) - x_{l-1})} e^{- \frac{(M_{n,x}^{\star}(k) - M_{n,x}^{\star}(t^{l-1}) + M_{n,x}^{\star}(t^{l-1}) - x_{l-1})^2}{2(k - t^{l-1}) \bar{\sigma}_l^2}}.
        \end{align}
        Use successively $x_{l-1} \leq M_{n,x}^{\star}(t^{l-1})$ from \eqref{eq:lem:IBRW.order2.upper.bound.integral}, the definition of $M_n^{\star}$ in \eqref{eq:IBRW.order2.optimal.path}, the fact that $b_{n,x}(k) \geq x$ and $x\mapsto (\log x)/x$ is decreasing for $x\geq e$, to show
        \begin{align}\label{eq:lem:IBRW.order2.upper.bound.gaussian.estimate.explain}
            &M_{n,x}^{\star}(k) - x_{l-1} \geq M_{n,x}^{\star}(k) - M_{n,x}^{\star}(t^{l-1}) \notag \\
            & \quad = g (k - t^{l-1}) \bar{\sigma}_l - \frac{(1 + 2 \cdot \delta_l) \bar{\sigma}_l}{2 g} \frac{(k - t^{l-1})}{\nabla t^l} \log(\nabla t^l) + b_{n,x}(k) - x \notag \\
            & \quad \geq g (k - t^{l-1}) \bar{\sigma}_l - \frac{(1 + 2 \cdot \delta_l) \bar{\sigma}_l}{2 g} \log(e \vee (k - t^{l-1})).
        \end{align}
        Plugging inequality \eqref{eq:lem:IBRW.order2.upper.bound.gaussian.estimate.explain} in \eqref{eq:lem:IBRW.order2.upper.bound.gaussian.estimate} and using the definition of $b_{n,x}$ from \eqref{eq:order2.barrier} and the fact that $M_{n,x}^{\star}(t^{l-1}) - x_{l-1} = x - \sum_{j=1}^{l-1} y_j$, we have
        \begin{align*}
            (1)
            &\leq C 2^{-(k - t^{l-1})} ~\frac{e^{\frac{(1 + 2 \cdot \delta_l)}{2}\log(e \vee (k - t^{l-1})) -\frac{b_{n,x}(k) - x}{g^{-1} \bar{\sigma}_l}}}{\sqrt{k - t^{l-1}}} ~e^{-\frac{M_{n,x}^{\star}(t^{l-1}) - x_{l-1}}{g^{-1} \bar{\sigma}_l}} \notag \\
            &\leq \widetilde{C} 2^{-(k - t^{l-1})} h_l(k) (k - t^{l-1})^{\bb{1}_{\{l\in \mathcal{A}_m ~\text{and } (t^{l-1} + t^l)/2 < k \leq t^l\}}} e^{-\frac{x - \sum_{j=1}^{l-1} y_j}{g^{-1} \bar{\sigma}_l}}
        \end{align*}
        where
        \begin{equation*}
            h_l(k) \circeq \left\{\hspace{-1mm}
            \begin{array}{ll}
                (k - t^{l-1})^{-3/2} ~&\mbox{when } l\in \mathcal{A}_m ~\text{and } t^{l-1} < k \leq \frac{t^{l-1} + t^l}{2} \\
                (t^l - k)^{-5/2} ~&\mbox{when } l\in \mathcal{A}_m ~\text{and } \frac{t^{l-1} + t^l}{2} < k < t^l \\
                1 ~&\mbox{when } k = t^l.
            \end{array}\right.
        \end{equation*}
        Note that the last inequality is an equality with $\widetilde{C} = C$ whenever $k - t^{l-1} \geq e$. When $k - t^{l-1}\in \{1,2\}$, taking $\widetilde{C} = e^{3/2} \cdot C$ is sufficient to ``absorb'' the terms that do not cancel out exactly.
    \end{proof}

    \begin{proof}[Proof of inequality \eqref{eq:bound.on.(2)j}]
        Let $j\in \mathcal{A}_{l-1}$ and define
        \begin{equation*}
            z_{i,j} \circeq M_{n,x}^{\star}(i) - x_{j-1} - \frac{i - t^{j-1}}{\nabla t^j} \nabla x_j, \ \ \ t^{j-1} < i < t^j.
        \end{equation*}
        We have
        \begin{equation*}
            \begin{aligned}
                z_{i,j}
                &= b_{n,x}(i) + M_n^{\star}(i) + \left\{\frac{i - t^{j-1}}{\nabla t^j} x_{j-1} + \frac{t^j - i}{\nabla t^j} x_j\right\} - x_{j-1} - x_j \\
                &= b_{n,x}(i) + \left[M_n^{\star}(i) - \frac{t^j - i}{\nabla t^j} M_n^{\star}(t^{j-1}) - \frac{i - t^{j-1}}{\nabla t^j} M_n^{\star}(t^j)\right] \\
                &\quad+ \left\{\frac{i - t^{j-1}}{\nabla t^j} (x_{j-1} - M_n^{\star}(t^{j-1})) + \frac{t^j - i}{\nabla t^j} (x_j - M_n^{\star}(t^j))\right\} \\
                &\quad- (x_{j-1} - M_n^{\star}(t^{j-1})) - (x_j - M_n^{\star}(t^j)).
            \end{aligned}
        \end{equation*}
        Now, bound the braces using $(x_{j-1} - M_n^{\star}(t^{j-1})) \vee (x_j - M_n^{\star}(t^j)) \leq x$ from the integration limits of $x_{j-1}$ and $x_j$ in \eqref{eq:lem:IBRW.order2.upper.bound.integral}. The quantity between the brackets is zero because $M_n^{\star}$ is affine on $[t^{j-1},t^j]$. Consequently,
        \vspace{1mm}
        \begin{align}\label{eq:lem:IBRW.order2.upper.bound.zij}
            z_{i,j}
            &\leq b_{n,x}(i) + x - (x_{j-1} - M_n^{\star}(t^{j-1})) - (x_j - M_n^{\star}(t^j)) \notag \\
            &\stackrel{\eqref{eq:change.of.variables}}{=} b_n(i) + 2x - \sum_{j'=1}^{j-1} y_{j'} - \sum_{j'=1}^j y_{j'}.
        \end{align}
        Since $(2)_j \circeq \mathbb{P}(B_{v,i}^j < z_{i,j}, \ t^{j-1} \hspace{-1mm}< \hspace{-0.5mm}i \hspace{-0.5mm}< \hspace{-0.5mm}t^j)$, where $B_v^j$ is a discrete $\bar{\sigma}_j$-Brownian bridge on $[t^{j-1},t^j]$, the conclusion follows from Lemma \ref{lem:bramson.pont.brownien} and \eqref{eq:lem:IBRW.order2.upper.bound.zij}.
    \end{proof}

    \begin{proof}[Proof of inequality \eqref{eq:bound.on.(3)}]
        Assume $l\in \mathcal{A}_m$ and $(t^{l-1}+t^l)/2 < k \leq t^l$. The other cases are trivial because $(3)$ is a probability.
        Now, define
        \begin{equation*}
            z_i \circeq (M_{n,x}^{\star}(i) - x_{l-1}) - \frac{i - t^{l-1}}{k - t^{l-1}} (M_{n,x}^{\star}(k) - x_{l-1}), \ \ \ t^{l-1} < i < k.
        \end{equation*}
        Similarly to the proof of \eqref{eq:bound.on.(2)j}, the path $M_n^{\star}$ is affine on $[t^{l-1},t^l] \supseteq [t^{l-1},k]$ and $x_{l-1} - M_n^{\star}(t^{l-1}) \leq x$ from the integration limits of $x_{l-1}$ in \eqref{eq:lem:IBRW.order2.upper.bound.integral}, so
        \begin{align}\label{eq:lem:IBRW.order2.upper.bound.zi}
            z_i
            &= b_{n,x}(i) - \frac{i - t^{l-1}}{k - t^{l-1}} b_{n,x}(k) - \frac{k - i}{k - t^{l-1}} (x_{l-1} - M_n^{\star}(t^{l-1})) \notag \\
            &\quad+ \left[M_n^{\star}(i) - \frac{k - i}{k - t^{l-1}} M_n^{\star}(t^{l-1}) - \frac{i - t^{l-1}}{k - t^{l-1}} M_n^{\star}(k)\right] \notag \\
            &= b_n(i) - \frac{i - t^{l-1}}{k - t^{l-1}} b_n(k) + \left(1 - \frac{i - t^{l-1}}{k - t^{l-1}}\right) x \notag \\
            &\quad+ \frac{i - t^{l-1}}{k - t^{l-1}} (x_{l-1} - M_n^{\star}(t^{l-1})) - (x_{l-1} - M_n^{\star}(t^{l-1})) \notag \\
            &\leq b_n(i) - \frac{i - t^{l-1}}{k - t^{l-1}} b_n(k) + x - \sum_{j'=1}^{l-1} y_{j'} .
        \end{align}
        In order to use Lemma \ref{lem:bramson.pont.brownien}, it remains to show that the first two terms in \eqref{eq:lem:IBRW.order2.upper.bound.zi} are bounded by an appropriate logarithmic barrier. Assume for now that $k \neq t^l$. There are three cases to consider.

        \noindent
        \vspace{1mm}

        \noindent
        \fbox{{\bf Case 1} : All $i$ such that $t^{l-1} < i \leq (t^{l-1} + k)/2 < (t^{l-1} + t^l)/2 < k < t^l$}\vspace{3mm}

        \noindent
        \vspace{2mm}
        Clearly,
        \vspace{-6mm}
        \begin{align}\label{eq:lem:IBRW.order2.upper.bound.pont.1}
            \hspace{10mm}b_n(i) - \frac{i - t^{l-1}}{k - t^{l-1}} b_n(k) \hspace{1mm}\leq\hspace{1mm} b_n(i) \stackrel{\eqref{eq:order2.barrier}}{=} \frac{5}{2} \frac{\bar{\sigma}_l}{g} \log(i - t^{l-1}).
        \end{align}

        \noindent
        \vspace{1mm}

        \noindent
        \fbox{{\bf Case 2} : All $i$ such that $t^{l-1} < (t^{l-1} + k)/2 < i \leq (t^{l-1} + t^l)/2 < k < t^l$}\vspace{3mm}

        \noindent
        Observe that $i - t^{l-1} \leq t^l - i$ and $t^l - k \leq k - t^{l-1}$ and $x\mapsto (\log x)/x$ is decreasing for $x \geq e$. Also, we have $(t^l - i) = (t^l - k) + (k - i) \leq 2 (t^l - k) (k - i)$ because $a + b \leq 2ab$ for $a,b \geq 1$. Using all this (in that order), we get
        \begin{align}\label{eq:lem:IBRW.order2.upper.bound.pont.2}
            b_n(i) - \frac{i - t^{l-1}}{k - t^{l-1}} b_n(k)
            &\stackrel{\eqref{eq:order2.barrier}}{=} \frac{5}{2} \frac{\bar{\sigma}_l}{g} \left\{\log\left(\frac{i - t^{l-1}}{t^l - k}\right) + \frac{k - i}{k - t^{l-1}} \log(t^l - k)\right\} \notag \\
            &\stackrel{\phantom{\eqref{eq:order2.barrier}}}{\leq} \frac{5}{2} \frac{\bar{\sigma}_l}{g} \left\{\log\left(\frac{t^l - i}{t^l - k}\right) + \log(e \vee (k - i))\right\} \notag \\
            &\stackrel{\phantom{\eqref{eq:order2.barrier}}}{\leq} \frac{5}{2} \frac{\bar{\sigma}_l}{g} \left\{\log2 + 2 \log(e \vee (k - i))\right\}.
        \end{align}

        \noindent
        \fbox{{\bf Case 3} : All $i$ such that $t^{l-1} < (t^{l-1} + t^l)/2 < i < k < t^l$}\vspace{3mm}

        \noindent
        By the same reasoning as in Case 2 (without $i - t^{l-1} \leq t^l - i$), we get
        \begin{align}\label{eq:lem:IBRW.order2.upper.bound.pont.3}
            b_n(i) - \frac{i - t^{l-1}}{k - t^{l-1}} b_n(k)
            &\stackrel{\eqref{eq:order2.barrier}}{=} \frac{5}{2} \frac{\bar{\sigma}_l}{g} \left\{\log\left(\frac{t^l - i}{t^l - k}\right) + \frac{k - i}{k - t^{l-1}} \log(t^l - k)\right\} \notag \\
            &\stackrel{\phantom{\eqref{eq:order2.barrier}}}{\leq} \frac{5}{2} \frac{\bar{\sigma}_l}{g} \left\{\log2 + 2 \log(e \vee (k - i))\right\}.
        \end{align}
        Finally, when $k = t^l$, the inequalities \eqref{eq:lem:IBRW.order2.upper.bound.pont.1}, \eqref{eq:lem:IBRW.order2.upper.bound.pont.2} and \eqref{eq:lem:IBRW.order2.upper.bound.pont.3} are trivial because $b_n(k) = 0$. Therefore, applying all three inequalities in \eqref{eq:lem:IBRW.order2.upper.bound.zi}, there exist appropriate constants $D, \widetilde{D} > 0$, depending only on $(\bb{\sigma},\bb{\lambda})$, for which
        \begin{align*}
            z_i
            &\leq
            \left\{\hspace{-2mm}
            \begin{array}{ll}
                \vspace{1mm}\widetilde{D} \log(i - t^{l-1}) + D + x - \sum_{j'=1}^{l-1} y_{j'} &\mbox{if } t^{l-1} < i \leq \frac{t^{l-1} + k}{2} \\
                \widetilde{D} \log(k - i) + D + x - \sum_{j'=1}^{l-1} y_{j'} &\mbox{if } \frac{t^{l-1} + k}{2} < i < k
            \end{array}
            \right. \notag \\
            &\leq
            \left\{\hspace{-2mm}
            \begin{array}{ll}
                \vspace{1mm}\widetilde{D} \log(i - t^{l-1}) + D + 2x - 2\sum_{j'=1}^{l-2} y_{j'} - y_{l-1} &\mbox{if } t^{l-1} < i \leq \frac{t^{l-1} + k}{2} \\
                \widetilde{D} \log(k - i) + D + 2x - 2\sum_{j'=1}^{l-2} y_{j'} - y_{l-1} &\mbox{if } \frac{t^{l-1} + k}{2} < i < k.
            \end{array}
            \right.
        \end{align*}
        We used $\sum_{j'=1}^{l-2} y_{j'} \leq x$ from the integration limits of $y_{l-2}$ in \eqref{eq:lem:IBRW.order2.upper.bound.integral.2} to get the last inequality.
        When $l\in \mathcal{A}_m$, recall that $(3) \circeq \mathbb{P}(B_{v,i} < z_i, \ t^{l-1} \hspace{-1mm}< \hspace{-0.5mm}i \hspace{-0.5mm}< \hspace{-0.5mm}k)$, where $B_v$ is a discrete $\bar{\sigma}_l$-Brownian bridge on $[t^{l-1},k]$. Applying Lemma \ref{lem:bramson.pont.brownien} yields the conclusion.
    \end{proof}

    \begin{proof}[Proof of Lemma \ref{lem:IBRW.order2.upper.bound.restricted} (last part)]
        By applying Lemma \ref{lem:tech.lemma.1} in \eqref{eq:lem:IBRW.order2.upper.bound.clubsuit}, the integral in \eqref{eq:lem:IBRW.order2.upper.bound.integral.2} is smaller than
        \begin{align}\label{eq:lem:IBRW.order2.upper.bound.integral.3}
            &C \hspace{0.5mm}2^{-k} \hspace{0.5mm}h_l(k) \hspace{0.5mm}e^{-x \frac{g}{\bar{\sigma}_l}} \hspace{-1mm}\int_{-\infty}^x \int_{-\infty}^{x - y_1} \hspace{-4mm}\ldots \int_{-\infty}^{x - \sum_{j=1}^{l-2} y_j} \hspace{-1mm}(1 + D + 2x - 2\sum_{j'=1}^{l-2} y_{j'} - y_{l-1})^{2 \cdot \delta_l} \notag \\
            & ~\times \left[\prod_{j\in \mathcal{A}_{l-1}} (1 + D + 2x - 2\sum_{j'=1}^{j-1} y_{j'} - y_j)^2\right] \cdot \prod_{j=1}^{l-1} e^{y_j \left[\frac{g}{\bar{\sigma}_l} - \frac{g}{\bar{\sigma}_j}\right]} ~d \boldsymbol{y}
        \end{align}
        for an appropriate constant $D = D(\bb{\sigma},\bb{\lambda}) > 0$. To obtain \eqref{eq:lem:IBRW.order2.upper.bound.integral.3}, the terms $(\nabla t^j)$ in \eqref{eq:lem:IBRW.order2.upper.bound.integral.2} canceled with the factors $1/(\nabla t^j)$ in \eqref{eq:bound.on.(2)j}, for all $j\in \mathcal{A}_{l-1}$. Similarly, the term $(k - t^{l-1})$ in \eqref{eq:bound.on.(1)} canceled with the factor $1/(k - t^{l-1})$ in \eqref{eq:bound.on.(3)}, when $l\in \mathcal{A}_m$ and $(t^{l-1} + t^l)/2 < k \leq t^l$.

        To bound the integral in \eqref{eq:lem:IBRW.order2.upper.bound.integral.3}, it is crucial to observe that the brackets in the exponentials are always strictly positive because $\bar{\sigma}_1 > \bar{\sigma}_2 > \ldots > \bar{\sigma}_m$ by definition. Denote these brackets by $\beta_{j,l}, ~ 1 \leq j \leq l-1$.
        We evaluate the integral iteratively. Note that $\sum_{j=1}^{l-2} y_j \leq x$ and $\sum_{j=1}^{l-3} y_j \leq x$
        from the integration limits of $y_{l-2}$ and $y_{l-3}$ in \eqref{eq:lem:IBRW.order2.upper.bound.integral.3}. By integrating by parts, it is easy to show that the first integral (from the interior) have the property
        \begin{align*}
            &\int_{-\infty}^{x - \sum_{j=1}^{l-2} y_j} (1 + D + 2x - 2\sum_{j'=1}^{l-2} y_{j'} - y_{l-1})^a ~e^{y_{l-1} \beta_{l-1,l}} ~d y_{l-1} \notag \\
            &\leq \frac{(a+1)!}{(1 \wedge \beta_{l-1,l})^{a+1}} ~(1 + D + 2x - 2\sum_{j'=1}^{l-3} y_{j'} - y_{l-2})^a ~e^{(x - \sum_{j=1}^{l-2} y_j) \beta_{l-1,l}}
        \end{align*}
        for any exponent $a\in \N_0$.
        Therefore, iterating this reasoning in \eqref{eq:lem:IBRW.order2.upper.bound.integral.3} gives
        \begin{align*}\eqref{eq:lem:IBRW.order2.upper.bound.integral.3}
            &\leq \widetilde{C} \hspace{0.5mm}2^{-k} \hspace{0.5mm}h_l(k) \hspace{0.5mm}e^{-x \frac{g}{\bar{\sigma}_l}}  \hspace{-0.5mm} \cdot \hspace{-0.5mm} (1 + D + x)^{2 \sum_{j=1}^l \delta_j} e^{x \sum_{j=1}^{l-1} \beta_{j,j+1}} \\
            &= \widetilde{C} \hspace{0.5mm}2^{-k} \hspace{0.5mm}h_l(k) \hspace{0.5mm}e^{-x \frac{g}{\bar{\sigma}_1}} \hspace{-0.5mm} \cdot \hspace{-0.5mm} (1 + D + x)^{2 \sum_{j=1}^l \delta_j}.
        \end{align*}
        Applying this bound in \eqref{eq:lem:IBRW.order2.upper.bound.debut} yields the conclusion since
        \begin{equation*}
            \sum_{l=1}^m \hspace{-2mm}\sum_{\hspace{1mm}\substack{k\in \mathcal{T}_m \\ \hspace{1mm}t^{l-1} < k \leq t^l}} \hspace{-3mm} h_l(k) < \infty.
        \end{equation*}
        This ends the proof of Lemma \ref{lem:IBRW.order2.upper.bound.restricted}.
    \end{proof}

    \begin{lemma}[Lower bound]\label{lem:IBRW.order2.lower.bound.restricted}
        Let $\{S_v\}_{v\in \D_n}$ be the $(\bb{\sigma},\bb{\lambda})$-BRW at time $n$ of Definition \ref{def:IBRW}, \hspace{-0.25mm}under Restriction \ref{eq:IBRW.restriction}. \hspace{-0.25mm}Recall the definition of $M_n^{\star}$ from \eqref{eq:IBRW.order2.optimal.path}. For all $\varepsilon > 0$, there exists $K_{\varepsilon} > 0$ such that for all $n\in \N$,
        \begin{equation*}
            \prob{}{\max_{v\in \D_n} S_v \leq M_n^{\star}(n) - K_{\varepsilon}} < \varepsilon.
        \end{equation*}
    \end{lemma}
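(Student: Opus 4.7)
The plan is a second-moment (Paley--Zygmund) argument that mirrors the estimates of Lemmas \ref{lem:IBRW.order2.upper.bound.restricted} and \ref{lem:tech.lemma.1}, followed by a hierarchical step that upgrades a constant-order success probability to $1-\varepsilon$. For large $K$, I would define the \emph{good set} $\mathcal{G}_K$ of particles $v\in\mathcal{D}_n$ satisfying: (i) $S_v(n)-M_n^{\star}(n)\in[-K,-K+1]$; (ii) $S_v(t^j)-M_n^{\star}(t^j)\in[-K,0]$ at each effective time $t^j$; and (iii) for every $j\in\mathcal{A}_m$, the Brownian bridge $B_{v,i}^j$ from Lemma \ref{lem:tech.lemma.2} stays strictly below the inverted log barrier $-b_n(i)-1$ on $(t^{j-1},t^j)$. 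Writing $N_K\circeq|\mathcal{G}_K|$, one has $\{N_K\geq 1\}\subseteq\{\max_v S_v\geq M_n^{\star}(n)-K\}$, so it suffices to make $\mathbb{P}(N_K\geq 1)$ arbitrarily close to $1$.

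By independence of increments across effective intervals, $\mathbb{E}[N_K]$ factorises into Gaussian densities---whose $2^{-\nabla t^j}$ factors exactly absorb the counting factor $|\mathcal{D}_n|=2^n$, leaving $(\nabla t^j)^{\delta_j}$ together with a boost $e^{Kg/\bar{\sigma}_1}$---times Brownian-bridge stay-below probabilities. For the latter, the matching lower-bound counterpart of Lemma \ref{lem:bramson.pont.brownien} (derived from Proposition~1 of \cite{MR0494541}) gives a probability of order $1/\nabla t^j$ that the bridge remains below the log barrier, precisely cancelling the $(\nabla t^j)^{\delta_j}$ prefactors and yielding $\mathbb{E}[N_K]\geq c(K)>0$ uniformly in $n$, with $c(K)\to\infty$ as $K\to\infty$. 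For the second moment, I would decompose $\mathbb{E}[N_K^2]=\sum_{k}\sum_{\rho(u,v)=k}\mathbb{P}(u,v\in\mathcal{G}_K)$; the Markov property at the branching time $k\in(t^{l-1},t^l]$ decouples the two descendant trajectories, so the pair contribution factors into a shared prefactor up to time $k$ times two independent copies of exactly the estimates from Lemma \ref{lem:tech.lemma.1}. The same cancellations as in the final part of the proof of Lemma \ref{lem:IBRW.order2.upper.bound.restricted}, together with the summability $\sum_{l}\sum_{k\in\mathcal{T}_m\cap(t^{l-1},t^l]}h_l(k)<\infty$, produce $\mathbb{E}[N_K^2]\leq C(K)\,\mathbb{E}[N_K]^2$ uniformly in $n$, so that Paley--Zygmund yields $\mathbb{P}(\max_v S_v\geq M_n^{\star}(n)-K)\geq c_0(K)>0$ uniformly.

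The main obstacle is not the second-moment bound itself but upgrading this constant lower bound to the $1-\varepsilon$ required by the lemma. I would handle this by induction on the number $m$ of effective scales: applying the lower bound just obtained at the first effective time $t^1$ (viewing $\mathcal{D}_{t^1}$ as a standard BRW with constant variance $\bar{\sigma}_1$), one shows that with probability $\geq 1-\varepsilon/2$ there are at least $n^{\alpha}$ particles in $\mathcal{D}_{t^1}$ whose position and sub-trajectory sit within the required tube and barrier. Each such particle then roots an independent BRW of length $n-t^1$ with one fewer effective scale, to which the inductive hypothesis applies; by independence the probability that every one of these $n^{\alpha}$ subtrees fails to produce a particle within $K$ of its own target is bounded by $(1-c_0)^{n^{\alpha}}\to 0$. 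Restriction \ref{eq:IBRW.restriction} is precisely what makes this recursion compatible with the barrier conditions, since without it the inverted log barrier would have to be split into three pieces as discussed in Section \ref{sec:discussion}, significantly complicating the cancellations appearing in the second-moment computation.
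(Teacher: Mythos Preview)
Your approach diverges substantially from the paper's, and it contains a real gap in the second-moment step.

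The paper never runs a global Paley--Zygmund across all effective intervals, nor any inductive amplification. Instead it (i) invokes tightness of $\max_v S_v$ around its median (Theorem~1 of \cite{MR3012090}), so that it suffices to prove $\mathbb{P}\big(\max_v S_v\ge M_n^\star(n)-C\big)\ge c$ for \emph{some} fixed $c,C>0$; (ii) factorises this probability over the partition $\{\lambda_{i_d}\}$ of Remark~\ref{rem:IBRW.restriction} by independence of increments; (iii) on each constant-variance piece simply cites the homogeneous result of \cite{MR2537549}; and (iv) on each piece where $\mathcal{J}_{\sigma^2}$ lies strictly below its hull, runs Paley--Zygmund with a \emph{polynomial} tube $f_{k,n}\asymp n^{2/3}$ around the optimal path $s_{k,n}(x)=\mathcal{J}_{\sigma^2}(k/n)\,x/\mathcal{J}_{\sigma^2}(\lambda^1)$, controlling the second moment through the strict-concavity constants $\eta_1<1$, $\eta_2>1$. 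No logarithmic barrier appears in the lower bound at all.

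Your main gap is precisely on the intervals $[t^{j-1},t^j]$ with $j\notin\mathcal{A}_m$. Your good set $\mathcal{G}_K$ imposes no constraint on $S_v(k)$ for $k$ in the interior of such an interval, yet the branching time $\rho(u,v)$ in $\mathbb{E}[N_K^2]$ ranges over all of $\{0,\dots,n\}$, not only over $\mathcal{T}_m$. Lemma~\ref{lem:tech.lemma.1} and the summability $\sum_{k\in\mathcal{T}_m}h_l(k)<\infty$ say nothing about these interior branching times, and without a barrier there the pair probability $\mathbb{P}(u,v\in\mathcal{G}_K)$ is not small enough: conditionally on $S_u(t^{j-1})$ and $S_u(t^j)$ lying in their $O(1)$ windows, $S_u(k)$ still fluctuates by order $\sqrt{n}$, and the ``two independent copies after $k$'' do not reproduce the single-particle estimate uniformly over that range. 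This is exactly why the paper introduces the $n^{2/3}$ tube and exploits $\mathcal{J}_{\sigma^2}<\mathcal{J}_{\bar\sigma^2}$ to force geometric decay of the second-moment sum; your sketch provides no substitute for this mechanism.

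Two smaller issues. First, the condition $B_{v,i}^j<-b_n(i)-1$ forces the bridge to dip to $-\Theta(\log n)$ at the midpoint while being $0$ at the endpoints; this event has probability $o(1/\nabla t^j)$ and would kill your first-moment lower bound --- presumably you want $B_{v,i}^j<+1$ or $<b_n(i)+1$ so that the ballot-type lower bound of order $1/\nabla t^j$ applies. Second, your inductive base step treats $[0,t^1]$ as a homogeneous BRW with variance $\bar\sigma_1$, which is false whenever $1\notin\mathcal{A}_m$; and producing $n^{\alpha}$ good particles at $t^1$ (rather than one) is a genuinely stronger statement that needs its own argument. The paper's tightness-around-the-median reduction bypasses the amplification question entirely and is worth adopting.
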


    \begin{proof}
        Let $S_n^{\star} \circeq \max_{v\in \D_n} S_v$. From Theorem $1$ of \cite{MR3012090}, we know that the family $\{S_n^{\star} - \text{Med}(S_n^{\star})\}_{n\in \N}$ is tight, that is for all $\varepsilon > 0$, there exists $\widetilde{K}_{\varepsilon} > 0$ such that for all $n\in \N$,
        \begin{equation}\label{eq:lem:IBRW.order2.borne.inferieure.tension.mediane}
            \prob{}{|S_n^{\star} - \text{Med}(S_n^{\star})| \geq \widetilde{K}_{\varepsilon}} < \varepsilon.
        \end{equation}
        We claim that there exist $c,C > 0$ and $n_0,\widetilde{n}_0 \in \N$ such that
        \begin{equation}\label{eq:lem:IBRW.order2.borne.inferieure.probabilite}
            \hspace{-1.5mm}
            \left\{\hspace{-1.5mm}
                \begin{array}{l}
                    \prob{}{S_n^{\star} \geq M_n^{\star}(n) - C} \geq c \\
                    \text{for all } n\geq n_0
                \end{array}\hspace{-2mm}
            \right\}
            \Longrightarrow
            \left\{\hspace{-1.5mm}
                \begin{array}{l}
                    \text{Med}(S_n^{\star}) \geq M_n^{\star}(n) - C - \widetilde{K}_c \\
                    \text{for all } n \geq \widetilde{n}_0
                \end{array}\hspace{-2mm}
            \right\}.
        \end{equation}
        Otherwise, by \eqref{eq:lem:IBRW.order2.borne.inferieure.tension.mediane}, for each choice of $c,C > 0$, there would exist a subsequence $\{n_i\}_{i\in \N}$ such that
        \begin{equation*}
            c \leq \prob{}{S_{n_i}^{\star} \geq M_{n_i}^{\star}(n_i) - C} \leq \prob{}{S_{n_i}^{\star} \geq \text{Med}(S_{n_i}^{\star}) + \widetilde{K}_c} < c\hspace{0.3mm} ,
        \end{equation*}
        which is impossible. If the left side of \eqref{eq:lem:IBRW.order2.borne.inferieure.probabilite} was satisfied for some constants $c,C > 0$, we could define $K_{\varepsilon} \circeq \widetilde{K}_{\varepsilon} + C + \widetilde{K}_c$, and \eqref{eq:lem:IBRW.order2.borne.inferieure.tension.mediane} would give
        \begin{equation*}
            \prob{}{S_n^{\star} \leq M_n^{\star}(n) - K_{\varepsilon}} \leq \prob{}{S_n^{\star} \leq \text{Med}(S_n^{\star}) - \widetilde{K}_{\varepsilon}} < \varepsilon, \quad n \geq \widetilde{n}_0,
        \end{equation*}
        and the proof of the lemma would be over.

        To conclude, it remains to show the left side of \eqref{eq:lem:IBRW.order2.borne.inferieure.probabilite}. We now use Restriction \ref{eq:IBRW.restriction}. Recall from Remark \ref{rem:IBRW.restriction} that $\{\lambda_{i_d}\}_{0 \leq d \leq p}$ is the union of all the scales $\lambda^j$ and all the isolated points where $\mathcal{J}_{\sigma^2}$ and $\mathcal{J}_{\bar{\sigma}^2}$ coincide.
        By independence of the increments, the left side of \eqref{eq:lem:IBRW.order2.borne.inferieure.probabilite} is satisfied if there exist constants $c,C > 0$ such that
        \begin{equation}\label{eq:lem:IBRW.order2.borne.inferieure.condition}
            \prob{}{\hspace{-4mm}\max_{\hspace{4mm}v\in \D_{\nabla_{\hspace{-0.5mm}d} t_{i_d}}} \hspace{-3.5mm}S_v^{i_d} \geq \nabla_{\hspace{-0.5mm}d} \hspace{0.3mm}M_n^{\star}(t_{i_d}) - (C/p)\hspace{-0.5mm}} \geq c^{1/p}, \quad 1 \leq d \leq p,
        \end{equation}
        where each field $\{S_v^{i_d}\}_v$ consists of the end points of an inhomogeneous BRW on the time interval $[0,\nabla_{\hspace{-0.5mm}d} \hspace{0.5mm}t_{i_d}]$ with variance parameters given by the step function $s\mapsto \sigma(s)$ on $(\lambda_{i_{d-1}},\lambda_{i_d}]$.

        It suffices to show \eqref{eq:lem:IBRW.order2.borne.inferieure.condition} for the subinterval(s) $[t_{i_{d-1}},t_{i_d}] \subseteq [0,t^1]$ since we did not assume anything on the other intervals $[t^{j-1},t^j]$. When $1\in \mathcal{A}_m$, that is when there is only one variance parameter $\sigma_1 = \bar{\sigma}_1$ on $(0,\lambda^1]$, then \eqref{eq:lem:IBRW.order2.borne.inferieure.condition} follows from Theorem 3 of \cite{MR2537549} by choosing $C > 0$ large enough and $c > 0$ small enough. Since $M_n^{\star}(\cdot)$ is linear on $[0,t^1]$ and the argument presented below could be applied for each subinterval of the partition (independently of $d$), we can assume, without loss of generality, that $t_{i_1} = t^1$, namely that
        \begin{equation}\label{eq:hypothesis}
            \begin{array}{c}
                \mathcal{J}_{\sigma^2} ~\text{lies strictly below its concave hull } \mathcal{J}_{\bar{\sigma}^2} \\
                \text{everywhere on $(0,t^1)$.}
            \end{array}
        \end{equation}

        The usual trick to prove a lower bound in the BRW setting is the Paley-Zygmund inequality. If we naively try to apply the Paley-Zygmund inequality to the number of particles that stay above the optimal path, the method will not work because the correlations of the BRW inflate the second moment too much, see \eqref{eq:lem:IBRW.order2.borne.inferieure.ratio.esperances}. Instead, we need to add a barrier condition that eliminates the overly large number of particles that are too far off the optimal path during their lifetime. For simplicity, we omit the superscript $i_1$ for $S_v^{i_1}$ in the remaining of the proof. Define $S_v \circeq S_v(t^1)$ and let
        \begin{align*}
            &I_n \circeq [M_n^{\star}(t^1), M_n^{\star}(t^1) + 1], \\
            &I_{k,n}(x) \circeq [s_{k,n}(x) - f_{k,n},s_{k,n}(x) + f_{k,n}], \\
            &\mathcal{N}_n \circeq \# \{v\in \D_{t^1} : S_v\in I_n, S_v(k)\in I_{k,n}(S_v) \ \ \forall 0 < k < t^1\},
        \end{align*}
        where $s_{k,n}(x)$ is a path leading to $x\in \R$ and $f_{k,n}$ is a concave barrier. The definition we give to $s_{k,n}$ could seem strange at first, but is actually quite natural. It is argued in \cite{MR3541850} and proved in Appendix A of \cite{Ouimet2014master} that the log-number of particles that are above the path
        \begin{equation*}
            s_{k,n}(x) \circeq \frac{\mathcal{J}_{\sigma^2}(k/n)}{\mathcal{J}_{\sigma^2}(\lambda^1)} x, \quad 0 \leq k \leq t^1,
        \end{equation*}
        during their lifetime is asymptotically the same as the log-number of particles above $x$ at time $t^1$. In particular, for particles reaching $x = M_n^{\star}(t^1)$ at time $t^1$, this path is optimal (for the first order). The barrier is
        \begin{equation}\label{eq:lem:IBRW.order2.lower.bound.barrier}
            f_{k,n} \circeq
            \left\{\hspace{-1mm}
            \begin{array}{ll}
                C_{\hspace{-0.3mm}f} \hspace{0.5mm}(\mathcal{J}_{\sigma^2}(k/n) n)^{2/3} ~&\mbox{if } 0 \leq k \leq t_1 \\
                C_{\hspace{-0.3mm}f} \hspace{0.5mm}(\mathcal{J}_{\sigma^2}(k/n,\lambda^1)n)^{2/3} ~&\mbox{if } t_1 < k \leq t^1
            \end{array}
            \right.
        \end{equation}
        where the constant $C_{\hspace{-0.5mm}f} \hspace{-0.5mm} > \hspace{-0.5mm}0$ will be chosen large enough later in the proof.
        The exponent $2/3$ is not essential here (any exponent in $(1/2,1)$ works), but this definition is useful for the Gaussian estimates.

        Under assumption \eqref{eq:hypothesis}, the Paley-Zygmund inequality yields that the probability in \eqref{eq:lem:IBRW.order2.borne.inferieure.condition} (when $d = 1$) is bounded from below by
        \begin{equation}\label{eq:lem:IBRW.order2.borne.inferieure.ratio.esperances}
            \prob{}{\max_{v\in \D_{t^1}} S_v \geq M_n^{\star}(t^1)} \hspace{0.5mm}\geq~ \prob{}{\mathcal{N}_n \geq 1} \stackrel{\textbf{P-Z}}{\geq} \frac{\left(\esp{}{\mathcal{N}_n}\right)^2}{\esp{}{(\mathcal{N}_n)^2}}.
        \end{equation}
        To conclude, we show $\esp{}{\mathcal{N}_n} \geq c_{\star}$ and $\esp{}{(\mathcal{N}_n)^2} \leq (\esp{}{\mathcal{N}_n})^2 + (1 + C_{\star})\esp{}{\mathcal{N}_n}$ for some constants $c_{\star},C_{\star} > 0$.

        \subsection*{Lower bound on the first moment}

        By the linearity of expectation, we have the lower bound
        \begin{align}\label{eq:lem:IBRW.order2.borne.inferieure.borne.moment1}
            \esp{}{\mathcal{N}_n}
            &= 2^{t^1} \prob{}{S_v\in I_n, S_v(k)\in I_{k,n}(S_v) \ \ \forall 0 < k < t^1} \notag \\
            &= 2^{t^1} \prob{}{S_v\in I_n} \prob{}{S_v(k)\in I_{k,n}(S_v) \ \ \forall 0 < k < t^1} \notag \\
            &\geq c_{\star},
        \end{align}

        \newpage
        \hspace{-4.5mm}provided that there exist constants $c_1, c_2 > 0$ such that
        \begin{enumerate}[\ \ \ \ \ \ \ \ (1)]
            \item $S_v$ is independent of $\{S_v(k) - s_{k,n}(S_v)\}_{k=0}^{t^1}$,
            \item $2^{t^1} \prob{}{S_v\in I_n} \geq c_1$,
            \item $\prob{}{S_v(k)\in I_{k,n}(S_v) \ \ \forall 0 < k < t^1} \geq c_2$.
        \end{enumerate}

        To show $(1)$, observe that $\var{}{S_v(k)} = \mathcal{J}_{\sigma^2}(k/n) n$ and $\var{}{S_v} = \mathcal{J}_{\sigma^2}(\lambda^1) n$ from \eqref{eq:IBRW.variance.increments}, so the independence between $S_v(k)$ and $S_v - S_v(k)$ gives
        \begin{equation*}
            \cov{S_v}{S_v(k) - s_{k,n}(S_v)} = \var{}{S_v(k)} - \frac{\mathcal{J}_{\sigma^2}(k/n)}{\mathcal{J}_{\sigma^2}(\lambda^1)} \var{}{S_v} = 0.\vspace{-1mm}
        \end{equation*}
        To show $(2)$, note that $M_n^{\star}(t^1) = g \bar{\sigma}_1 t^1 - \frac{1}{2} \frac{\bar{\sigma}_1}{g} \log(t^1)$, under assumption \eqref{eq:hypothesis}, and $\var{}{S_v} = \bar{\sigma}_1^2 t^1$. Therefore,
        \begin{equation*}
            \prob{}{S_v\in I_n} = \int_{M_n^{\star}(t^1)}^{M_n^{\star}(t^1) + 1} \hspace{-1.5mm}\frac{e^{-\frac{z^2}{2 \bar{\sigma}_1^2 t^1}}}{\sqrt{2 \pi \bar{\sigma}_1^2 t^1}} \hspace{0.5mm}dz \geq 1 \cdot \frac{c}{\sqrt{t^1}} e^{-\frac{(M_n^{\star}(t^1) + 1)^2}{2 \bar{\sigma}_1^2 t^1}} \geq c_1 \hspace{0.5mm}2^{-t^1}.
        \end{equation*}
        To show $(3)$, note that $\cov{s_{k,n}(S_v)}{S_v(k) - s_{k,n}(S_v)} = 0$, by the independence in $(1)$, and thus
        \begin{align*}
            \var{}{S_v(k) - s_{k,n}(S_v)}
            &= \cov{S_v(k)}{S_v(k) - s_{k,n}(S_v)} \\
            &= \mathcal{J}_{\sigma^2}(k/n) n \left[1 - \frac{\mathcal{J}_{\sigma^2}(k/n)}{\mathcal{J}_{\sigma^2}(\lambda^1)} \right].
        \end{align*}
        Then, sub-additivity followed by Gaussian estimates yield
        \begin{align*}
            &\prob{}{\hspace{-1.2mm}
                \begin{array}{l}
                    S_v(k)\in I_{k,n}(S_v) \\
                    \forall 0 < k < t^1
                \end{array}
                \hspace{-1.5mm}}
            \geq 1 - 2 \sum_{k=1}^{t^1 - 1} \prob{}{S_v(k) - s_{k,n}(S_v) > f_{k,n}} \\
            &\quad\quad\quad\quad\quad\quad\quad\geq 1 - 2 \sum_{k=1}^{t^1 - 1} C \exp\left(-\frac{1}{2} \frac{(f_{k,n})^2}{\mathcal{J}_{\sigma^2}(k/n) n \left[1 - \frac{\mathcal{J}_{\sigma^2}(k/n)}{\mathcal{J}_{\sigma^2}(\lambda^1)} \right]}\right).
        \end{align*}
        By considering the cases $0 < k \leq t_1$ and $t_1 < k < t^1$ separately, the last sum is bounded from above by
        \begin{equation*}
            \sum_{k=1}^{t_1} C e^{- \frac{1}{2} C_{\hspace{-0.5mm}f}^2 \hspace{0.5mm}\sigma_1^{2/3} k^{1/3}} + \sum_{k=t_1 + 1}^{t^1 - 1} C e^{-\frac{1}{2} C_{\hspace{-0.5mm}f}^2 \min_{i\in \{2,3,\ldots,\pi_1\hspace{-0.3mm}\}} \sigma_i^{2/3} (t^1 - k)^{1/3}}\vspace{-1mm}.
        \end{equation*}
        For $C_{\hspace{-0.5mm}f}$ large enough, this is strictly smaller than $1/2$, independently of $n$, which proves $(3)$.

        \subsection*{Upper bound on the second moment}

        To estimate the second moment, we split $\esp{}{(\mathcal{N}_n)^2}$ according to the branching time $\rho(u,v) \circeq \max\{r\in \{0,1,\ldots,t^1\} : u_r = v_r\}$ of each pair of particles :
        \vspace{-1mm}
        \begin{equation*}
            \esp{}{(\mathcal{N}_n)^2} = \sum_{r=0}^{t^1} \sum_{\substack{u,v\in \D_{t^1} \\ \rho(u,v) = r}} \hspace{-1mm}\prob{}{\hspace{-1mm}
                \begin{array}{l}
                    S_u,S_v\in I_n ~\text{and } S_u(k)\in I_{k,n}(S_u), \\
                    S_v(k)\in I_{k,n}(S_v) ~\text{for all } 0 < k < t^1
                \end{array}
                \hspace{-2mm}}.\vspace{-2mm}
        \end{equation*}
        When $\rho(u,v)=0$, the processes $\{S_u(k)\}_k$ and $\{S_v(k)\}_k$ are independent. Therefore, in the case $r=0$, the second sum above is bounded by $(\esp{}{\mathcal{N}_n})^2$ by adding the missing terms. In the case $r = t^1$, the second sum is equal to $\esp{}{\mathcal{N}_n}$ because $u$ and $v$ coincide. In the remaining cases $0 < r < t^1$, the increment $S_v - S_v(r)$ is independent of $\{S_u(k)\}_k$, and $S_u(k) = S_v(k)$ for all $k \leq r$. Therefore, $\esp{}{(\mathcal{N}_n)^2}$ is bounded from above by
        \vspace{-1mm}
        \begin{align}\label{eq:lem:IBRW.order2.borne.inferieure.moment2}
            &\hspace{-4mm}\left(\esp{}{\mathcal{N}_n}\right)^2 + \esp{}{\mathcal{N}_n} + \sum_{r=1}^{t^1 - 1} \hspace{-2.5mm}\sum_{\hspace{2mm}\substack{u,v\in \D_{t^1} \\ \rho(u,v) = r}}
                \hspace{-2.5mm}\prob{}{\hspace{-1mm}
                    \begin{array}{l}
                        S_u\in I_n ~\text{and } S_u(k)\in I_{k,n}(S_u) \\
                        \text{for all } 0 < k < t^1
                    \end{array}
                    \hspace{-1.5mm}}
        \end{align}
        \vspace{-8mm}
        \begin{equation*}
            \hspace{50mm}\cdot \max_{x\in I_n}
                \hspace{0.5mm}\prob{}{\hspace{-1mm}
                    \begin{array}{l}
                        S_v - S_v(r)\in x - I_{r,n}(x)
                    \end{array}
                    \hspace{-1mm}}. \notag
        \end{equation*}
        There are at most $2^{t^1} \hspace{-1.5mm}\cdot 2^{t^1 - r}$ \hspace{-0.5mm}pairs $(u,v) \hspace{-0.5mm}\in \hspace{-0.5mm}\D_{t^1}^2$ \hspace{-0.7mm}with branching time equal to $r$, so the double sum in \eqref{eq:lem:IBRW.order2.borne.inferieure.moment2} is bounded from above by
        \vspace{-1mm}
        \begin{equation}\label{eq:lem:IBRW.order2.borne.inferieure.moment2.suite}
            \esp{}{\mathcal{N}_n} \times \sum_{r=1}^{t^1 - 1} 2^{t^1 - r} \max_{\substack{x\in I_n \\ v\in \D_{t^1}}} \underbrace{\prob{}{\hspace{-1mm}
                    \begin{array}{l}
                        S_v - S_v(r)\in x - I_{r,n}(x)
                    \end{array}
                    \hspace{-1mm}}}_{(\spadesuit)_r}.\vspace{-2mm}
        \end{equation}
        It remains to estimate the probabilities $(\spadesuit)_r$ in \eqref{eq:lem:IBRW.order2.borne.inferieure.moment2.suite}. From \eqref{eq:IBRW.variance.increments}, we know that $\var{}{S_v - S_v(r)} = \mathcal{J}_{\sigma^2}(r/n,\lambda^1) n$ for all $v\in \D_{t^1}$.

        In the case $0 < r \leq t_1$, we have $f_{r,n} = C_{\hspace{-0.5mm}f} \hspace{0.5mm}(\sigma_1^2 r)^{2/3}$. Thus, for $x\in I_n$,
        \vspace{-0.5mm}
        \begin{align}
            (\spadesuit)_r
            &= \int_{x - I_{r,n}(x)} \hspace{-0.5mm}\frac{e^{-\frac{1}{2} \frac{z^2}{\mathcal{J}_{\sigma^2}(r/n,\lambda^1) n}}}{\sqrt{2\pi \mathcal{J}_{\sigma^2}(r/n,\lambda^1) n}} \hspace{0.5mm}dz
            \leq 2 f_{r,n} \frac{e^{-\frac{1}{2} \frac{(M_n^{\star}(t^1) - s_{r,n}(M_n^{\star}(t^1)) - f_{r,n})^2}{\mathcal{J}_{\sigma^2}(r/n,\lambda^1) n}}}{\sqrt{\mathcal{J}_{\sigma^2}(r/n,\lambda^1) n}} \notag \\
            &\leq C ~r^{2/3} ~2^{- \frac{\mathcal{J}_{\sigma^2}(r/n,\lambda^1) t^1}{\mathcal{J}_{\sigma^2}(\lambda^1)}} ~\frac{e^{\frac{1}{2} \frac{\mathcal{J}_{\sigma^2}(r/n,\lambda^1)}{\mathcal{J}_{\sigma^2}(\lambda^1)} \log(t^1)}}{\sqrt{\mathcal{J}_{\sigma^2}(r/n,\lambda^1) n}} ~e^{\frac{C_{\hspace{-0.5mm}f} \hspace{0.3mm}(\sigma_1^2 r)^{2/3}}{g^{-1} \bar{\sigma}_1}} \label{eq:in.align} \\
            &\leq C ~r^{2/3} ~2^{-(t^1 - \eta_1 r)} ~e^{\widetilde{C} \hspace{0.3mm}r^{2/3}}. \label{eq:lem:IBRW.order2.borne.inferieure.borne.triangle.1}
        \end{align}
        To obtain the last bound, we use two crucial observations.
        Since the function $x\mapsto (\log x)/x$ is decreasing for $x \geq e$, the ratio of the exponential over the square root in \eqref{eq:in.align} is bounded by a constant independent of $r$ and $n$. Also, under assumption \eqref{eq:hypothesis} and for $0 < r \leq t_1$,
        \begin{equation*}
            \frac{\mathcal{J}_{\sigma^2}(r/n,\lambda^1) t^1}{\mathcal{J}_{\sigma^2}(\lambda^1)} = t^1 - \frac{\frac{1}{r/n} \mathcal{J}_{\sigma^2}(r/n)}{\frac{1}{\lambda^1} \mathcal{J}_{\sigma^2}(\lambda^1)} r = t^1 - \frac{\frac{1}{\lambda_1} \mathcal{J}_{\sigma^2}(\lambda_1)}{\frac{1}{\lambda^1} \mathcal{J}_{\sigma^2}(\lambda^1)} r \circeq t^1 - \eta_1 r
        \end{equation*}
        where $\eta_1 < 1$ independently of $r$ and $n$. See Figure \ref{fig:eta} below for an example.

        \vspace{2mm}
        \setlength{\belowcaptionskip}{0pt}
        \begin{figure}[ht]
            \includegraphics[scale=0.7]{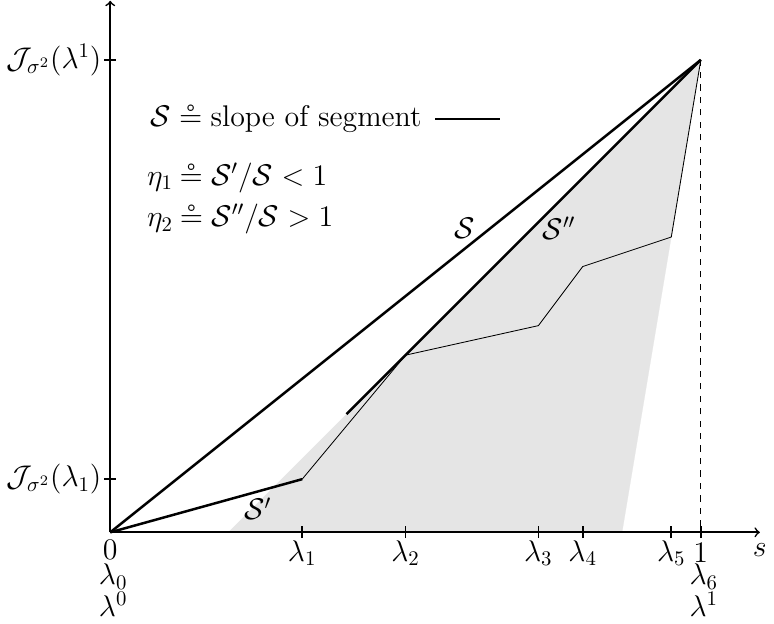}
            \captionsetup{width=0.8\textwidth}
            \caption{Example of $\eta_1$ and $\eta_2$ under assumption \eqref{eq:hypothesis}. The thin line represents $\mathcal{J}_{\sigma^2}$.}
            \label{fig:eta}
        \end{figure}

        Similarly, in the case $t_1 < r < t^1$, we have $f_{r,n} = C_{\hspace{-0.5mm}f} \hspace{0.5mm}(\mathcal{J}_{\sigma^2}(r/n,\lambda^1) n)^{2/3}$.
        Thus, for $x\in I_n$,
        \vspace{-3.5mm}
        \begin{align}
            (\spadesuit)_r
            &= \int_{x - I_{r,n}(x)} \hspace{-0.5mm}\frac{e^{-\frac{1}{2} \frac{z^2}{\mathcal{J}_{\sigma^2}(r/n,\lambda^1) n}}}{\sqrt{2\pi \mathcal{J}_{\sigma^2}(r/n,\lambda^1) n}} \hspace{0.5mm}dz
            \leq 2 f_{r,n} \frac{e^{-\frac{1}{2} \frac{(M_n^{\star}(t^1) - s_{r,n}(M_n^{\star}(t^1)) - f_{r,n})^2}{\mathcal{J}_{\sigma^2}(r/n,\lambda^1) n}}}{\sqrt{\mathcal{J}_{\sigma^2}(r/n,\lambda^1) n}} \notag \\
            &\leq C ~2^{- \frac{\mathcal{J}_{\sigma^2}(r/n,\lambda^1) t^1}{\mathcal{J}_{\sigma^2}(\lambda^1)}} ~\frac{e^{\frac{1}{2} \frac{\mathcal{J}_{\sigma^2}(r/n,\lambda^1)}{\mathcal{J}_{\sigma^2}(\lambda^1)} \log(t^1)}}{(\mathcal{J}_{\sigma^2}(r/n,\lambda^1) n)^{-1/6}} ~e^{\frac{C_{\hspace{-0.5mm} f} \hspace{0.3mm} (\mathcal{J}_{\sigma^2}(r/n,\lambda^1) n)^{2/3}}{g^{-1} \bar{\sigma}_1}} \label{eq:in.align.2} \\
            &\leq C ~2^{- \eta_2 (t^1 - r)} ~(\mathcal{J}_{\sigma^2}(r/n,\lambda^1) n)^{2/3} ~e^{\widetilde{C} \hspace{0.3mm}(t^1 - r)^{2/3}}. \label{eq:lem:IBRW.order2.borne.inferieure.borne.triangle.2}
        \end{align}
        Again, to obtain the last bound, we use two crucial observations. The first exponential in \eqref{eq:in.align.2} is bounded by $C (\mathcal{J}_{\sigma^2}(r/n,\lambda^1) n)^{1/2}$, where $C$ is independent of $r$ and $n$, using the fact that $x\mapsto (\log x)/x$ is decreasing for $x \geq e$. Also, under assumption \eqref{eq:hypothesis} and for $t_1 < r < t^1$,
        \begin{equation*}
            \frac{\mathcal{J}_{\sigma^2}(r/n,\lambda^1) t^1}{\mathcal{J}_{\sigma^2}(\lambda^1)} = \frac{\frac{1}{\lambda^1 - r/n} \mathcal{J}_{\sigma^2}(r/n,\lambda^1)}{\frac{1}{\lambda^1} \mathcal{J}_{\sigma^2}(\lambda^1)} (t^1 - r) \geq \eta_2 (t^1 - r)
        \end{equation*}
        where $\eta_2$ is the minimum of the last ratio with respect to $r\in \{t_1,\ldots,t^1-1\}$. Note that $\eta_2 > 1$ independently of $r$ and $n$, see Figure \ref{fig:eta} above.

        By combining the bounds on $(\spadesuit)_r$ in \eqref{eq:lem:IBRW.order2.borne.inferieure.borne.triangle.1} and \eqref{eq:lem:IBRW.order2.borne.inferieure.borne.triangle.2}, the sum in \eqref{eq:lem:IBRW.order2.borne.inferieure.moment2.suite} is bounded from above by
        \vspace{1mm}
        \begin{equation*}\label{eq:lem:IBRW.order2.borne.inferieure.borne.maltese}
            C \left[\sum_{r=1}^{t_1} 2^{-(1 - \eta_1) r + o(r)} + \sum_{r= t_1 + 1}^{t^1 - 1} 2^{(1 - \eta_2) (t^1 - r) + o(t^1 - r)}\right] \leq C_{\star}\vspace{2mm}
        \end{equation*}
        where $\eta_1 < 1$ and $\eta_2 > 1$ independently of $r$ and $n$. By applying this bound in \eqref{eq:lem:IBRW.order2.borne.inferieure.moment2.suite} and back in \eqref{eq:lem:IBRW.order2.borne.inferieure.moment2}, we have
        \vspace{1mm}
        \begin{equation}\label{eq:lem:IBRW.order2.borne.inferieure.borne.moment2}
            \frac{\esp{}{(\mathcal{N}_n)^2}}{\left(\esp{}{\mathcal{N}_n}\right)^2} \leq 1 + \frac{1 + C_{\star}}{\esp{}{\mathcal{N}_n}} \stackrel{\eqref{eq:lem:IBRW.order2.borne.inferieure.borne.moment1}}{\leq} 1 + \frac{1 + C_{\star}}{c_{\star}}.\vspace{1mm}
        \end{equation}
        Using \eqref{eq:lem:IBRW.order2.borne.inferieure.borne.moment2} in \eqref{eq:lem:IBRW.order2.borne.inferieure.ratio.esperances} yields \eqref{eq:lem:IBRW.order2.borne.inferieure.condition} when $d=1$, under assumption \eqref{eq:hypothesis}. This ends the proof of Lemma \ref{lem:IBRW.order2.lower.bound.restricted}.
    \end{proof}

\section*{Acknowledgements}

First, I would like to thank an anonymous referee for his valuable comments that led to improvements in the presentation of this paper.
I also gratefully acknowledge insightful discussions with Bastien Mallein and my advisor, Louis-Pierre Arguin.
This work is supported by a NSERC Doctoral Program Alexander Graham Bell scholarship (CGS D3).

%
%

\bibliographystyle{authordate1}
\bibliography{Ouimet_2018_BRW_bib}

\end{document}